\documentclass[11pt]{amsart}
\usepackage[textheight = 615pt,textwidth = 360pt]{geometry}
\usepackage{mathrsfs}
\usepackage{color}
\usepackage{bm}
\usepackage{amsfonts,verbatim,amssymb}
\usepackage{dsfont}
\usepackage{amscd}
\usepackage{extarrows}
\usepackage{amsmath}
\usepackage{mathrsfs}
\usepackage{enumerate}
\usepackage{amscd}
\usepackage[all]{xy}
\usepackage{hyperref}
\numberwithin{equation}{section}
\theoremstyle{plain} 
\newtheorem{theorem}{\indent\bf Theorem}[section]

\theoremstyle{definition} 

\newtheorem{remark}[theorem]{\bf Remark}
\newtheorem{thm}{Theorem}[section]
\newtheorem{cor}[thm]{Corollary}
\newtheorem{lem}[thm]{Lemma}

\theoremstyle{definition}

\theoremstyle{remark}

\newcommand{\bea}{\begin{eqnarray}}
	\newcommand{\eea}{\end{eqnarray}}
\newcommand{\ben}{\begin{eqnarray*}}
	\newcommand{\een}{\end{eqnarray*}}
\newcommand{\bt}{\begin{split}}
	\newcommand{\et}{\end{split}}

	\newcommand{\mc}{\mathbb{C}}
	\newcommand{\mr}{\mathbb{R}}

	\newcommand{\beq}{\begin{equation*}}
		\newcommand{\eeq}{\end{equation*}}
	\newcommand{\bi}{\begin{itemize}}
		\newcommand{\ei}{\end{itemize}}

\newcommand{\mo}{\mathcal{O}}

\newcommand{\rw}{\rightarrow}
\newcommand{\rwo}{\mapsto}

\newcommand{\re}{\operatorname{Re}}
\newcommand{\im}{\operatorname{Im}}

\DeclareMathOperator{\tr}{Tr}
	
	%
	
	
	
\DeclareMathOperator{\dom}{Dom}
\allowdisplaybreaks[4]
	\begin{document}

\title[{$\bar\partial$} Sobolev-type inequality and an improved $L^2$-estimate]
{{$\bar\partial$} Sobolev-type inequality and an improved $L^2$-estimate of $\bar\partial$ on bounded strictly pseudoconvex domains}

\author[F. Deng]{Fusheng Deng}
\address{Fusheng Deng: \ School of Mathematical Sciences, University of Chinese Academy of Sciences \\ Beijing 100049, P. R. China}
\email{fshdeng@ucas.ac.cn}

\author[W. Jiang]{Weiwen Jiang}
\address{Weiwen Jiang: \ Institute of Mathematics, Academy of Mathematics and Systems Science, Chinese Academy of Sciences, Beijing 100190, P. R. China}
\email{jiangweiwen@amss.ac.cn}

\author[X. Qin]{Xiangsen Qin}
\address{Xiangsen Qin: \ School of Mathematical Sciences, University of Chinese Academy of Sciences \\ Beijing 100049, P. R. China}
\email{qinxiangsen19@mails.ucas.ac.cn}

\begin{abstract}
We prove several Sobolev-type inequalities related to the $\bar\partial$-operator on bounded domains in $\mc^n$, which can be viewed as
a $\bar\partial$-version of the classical Sobolev inequality and its various generalizations, and apply them to derive a generalization of the Sobolev Inequality with Trace in $\mr^n$.
As applications to complex analysis, we get an integral form of  Maximum Modulus Principle for holomorphic functions, and an improvement of H\"ormander's $L^2$-estimate for $\bar\partial$ on bounded strictly pseudoconvex domains.
\end{abstract}

\maketitle
\tableofcontents
    \section{Introduction}
    \indent For any Lebesgue measurable function $f$ on an open subset $D\subset\mr^n$ and any $1\leq p\leq \infty$, let $\|f\|_{L^p(D)}$
    denote its $L^p$-norm. Similar for $\|\cdot\|_{L^p(\partial D)}$ if $D$ has Lipschitz boundary.\\
    \indent The following Poincar\'e Inequality is well known. Itself and its various modifications are of great interest in partial differential equations
    and isoperimetry theory (see e.g. \cite{Evans}, \cite{Gilbarg} and \cite{maggi}).
    \begin{thm}[{$L^2$} Poincar\'e Inequality]\label{thm:real poincare inequality}
    Let $D\subset\mr^n$ be a bounded domain with Lispchitz boundary, and let $R$ be the diameter of $D$, then
    there is a constant $\delta:=\delta(R)>0$ such that
    $$
    \delta\|f\|_{L^2(D)}\leq\|\nabla f\|_{L^2(D)}+\|f\|_{L^2(\partial D)},\ \forall f\in C^1(\overline D).
    $$
    \end{thm}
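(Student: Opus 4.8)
The plan is to prove the inequality by a single integration by parts against a well‑chosen vector field, with no recourse to compactness or spectral arguments, so that the constant comes out explicitly in terms of $R$ (and the fixed ambient dimension $n$). Fix a point $x_0\in D$ and consider the radial vector field $V(x)=x-x_0$, which satisfies $\operatorname{div}V\equiv n$ and, since $R$ is the diameter of $D$, $|V(x)|\le R$ for all $x\in\overline D$. For $f\in C^1(\overline D)$ the field $|f|^2V$ is $C^1$ on $\overline D$, so the divergence theorem (this is where the $C^1$ regularity of $\partial D$ is used) applies to
\[
\operatorname{div}\bigl(|f|^2V\bigr)=|f|^2\operatorname{div}V+\nabla(|f|^2)\cdot V,
\]
giving
\[
n\int_D|f|^2\,d\lambda=\int_{\partial D}|f|^2\,(V\cdot\nu)\,dS-\int_D\nabla(|f|^2)\cdot V\,d\lambda .
\]

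\textbf{Estimating the two terms on the right.} On the boundary, $|V\cdot\nu|\le|V|\le R$, so the first term is bounded by $R\int_{\partial D}|f|^2\,dS$. For the second term, using $|\nabla(|f|^2)|\le 2|f|\,|\nabla f|$ and $|V|\le R$, together with Young's inequality $2R\,|f|\,|\nabla f|\le \tfrac{n}{2}|f|^2+\tfrac{2R^2}{n}|\nabla f|^2$, we get
\[
\Bigl|\int_D\nabla(|f|^2)\cdot V\,d\lambda\Bigr|\le \frac{n}{2}\int_D|f|^2\,d\lambda+\frac{2R^2}{n}\int_D|\nabla f|^2\,d\lambda .
\]
Combining and absorbing the $\tfrac{n}{2}\int_D|f|^2$ term into the left‑hand side yields
\[
\frac{n}{2}\int_D|f|^2\,d\lambda\le R\int_{\partial D}|f|^2\,dS+\frac{2R^2}{n}\int_D|\nabla f|^2\,d\lambda\le \max\!\Bigl(R,\tfrac{2R^2}{n}\Bigr)\Bigl(\int_D|\nabla f|^2\,d\lambda+\int_{\partial D}|f|^2\,dS\Bigr),
\]
so the claim holds with $\delta=\delta(R):=\dfrac{n}{2}\,\min\!\bigl(R^{-1},\tfrac{n}{2}R^{-2}\bigr)$.

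\textbf{Where the difficulty lies.} There is essentially no deep obstacle here: the whole argument is the divergence‑theorem identity plus Cauchy–Schwarz/Young. The only points requiring care are (i) choosing the base point $x_0$ inside $D$ so that $|V|\le R$ is controlled purely by the diameter, and (ii) tuning the parameter in Young's inequality so that exactly the right multiple of $\int_D|f|^2$ is produced to be absorbed on the left while leaving a finite coefficient in front of $\int_D|\nabla f|^2$. I also expect that the same vector‑field computation, with $V$ replaced by other divergence‑positive fields, is what underlies the generalizations alluded to later in the paper, so it is worth isolating the identity $n\int_D|f|^2=\int_{\partial D}|f|^2(V\cdot\nu)-\int_D\nabla(|f|^2)\cdot V$ as the basic mechanism.
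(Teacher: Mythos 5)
Your proof is correct: for complex $f\in C^1(\overline D)$ the field $|f|^2V$ is $C^1$ on $\overline D$, so the divergence identity
\[
n\int_D|f|^2\,d\lambda=\int_{\partial D}|f|^2\,(V\cdot\nu)\,dS-\int_D\nabla(|f|^2)\cdot V\,d\lambda
\]
is exact; the bounds $|V\cdot\nu|\le|V|\le R$ on $\overline D$ and $|\nabla(|f|^2)|\le 2|f|\,|\nabla f|$ (Cauchy--Schwarz componentwise) hold; and your Young split is tuned so that exactly $\tfrac n2\int_D|f|^2$ is absorbed, giving $\delta=\tfrac n2\min\bigl(R^{-1},\tfrac n2 R^{-2}\bigr)$. (Strictly speaking $\delta$ also depends on the ambient dimension $n$, but $n$ is fixed data throughout, and the paper's own constants carry $n$ as well.)

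You should be aware, however, that the paper does not prove Theorem \ref{thm:real poincare inequality} at all: it is quoted as classical background, with references to \cite{Evans}, \cite{Gilbarg}, \cite{maggi}. So there is no ``paper's proof'' to compare against for this particular statement. Your radial-vector-field argument is the standard textbook mechanism, and essentially the same integration-by-parts device does appear once in the paper, in the proof of Lemma \ref{lem:n integral}, where $V$ is replaced by $\nabla\rho$ to get the trace bound $\int_{\partial D}|f|\le K(\int_D|\nabla f|+\int_D|f|)$. But your closing guess --- that this divergence identity ``underlies the generalizations alluded to later'' --- does not match how the paper actually proceeds. The $\bar\partial$ analogues (Theorems \ref{thm:L^p poincare inequality}, \ref{thm:sobolev inequality}, \ref{thm:poincare inequality with weight}) are proved via the Cauchy/Bochner--Martinelli representation formula together with a three-function H\"older inequality (Lemmas \ref{lem:basic lemma} and \ref{lem:weight2}), precisely because the vector-field computation does not survive the passage from $\nabla$ to $\bar\partial$: the term $\nabla(|f|^2)\cdot V$ produces $\partial f$ as well as $\bar\partial f$, and for complex non-compactly-supported $f$ one cannot control $\partial f$ by $\bar\partial f$. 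The representation-formula route also delivers the mixed-exponent estimates in $(p,q,r)$ of Theorem \ref{thm:sobolev inequality}, whereas the divergence argument is naturally confined to matching exponents. So your proof is a clean and fully explicit proof of the stated real Poincar\'e inequality, but it is not a template for the paper's main results.
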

    In the above theorem, $\nabla f$ is the gradient of $f$, $C^1(\overline D)$ represents the space of  complex valued functions which are of class $C^1$ in an open neighborhood of the closure $\overline D$ of $D$.\\
    \indent Theorem \ref{thm:real poincare inequality} has a more general version as follows.
    \begin{thm}\label{thm:real sobolev inequality}
    Let $D\subset\mr^n$ be a bounded domain with Lipschitz boundary, then
    \begin{itemize}
      \item[(i)] If $p,q,r$ satisfy
    $$1\leq p<n,\ 1\leq q<\frac{np}{n-p},\ \frac{(n-1)q}{n}<r<\infty,$$
    then  there is a constant $\delta:=\delta(|D|,n,p,q,r)>0$ such that
    $$
      \delta \|f\|_{L^q(D)}\leq \|\nabla f\|_{L^p(D)}+\|f\|_{L^r(\partial D)},\ \forall f\in C^1(\overline D),
    $$
    where $|D|$ is the Lebesgue measure of $D$.
      \item[(ii)] If $p,q$ satisfy
      $$2\leq n< p<\infty,\ 1\leq q\leq \infty, $$
  then there is a constant $\delta:=\delta(n,p,q)>0$ such that for any compact supported $f\in C^1(D)$,we have
    $$
    \delta\|f\|_{L^q(D)}\leq |D|^{\frac{1}{n}+\frac{1}{q}-\frac{1}{p}}\|\nabla f\|_{L^p(D)}.
    $$
    \end{itemize}
    \end{thm}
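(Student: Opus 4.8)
\emph{Part (ii).} Here the plan is essentially Morrey's inequality made scale-invariant. Since $f\in C^1(D)$ has compact support in $D$, extend it by $0$ to $\mathbb{R}^n$ and use the potential representation $f(x)=c_n\int_{\mathbb{R}^n}(x-y)\cdot\nabla f(y)\,|x-y|^{-n}\,d\lambda(y)$, so that $|f(x)|\le c_n\int_D|\nabla f(y)|\,|x-y|^{-(n-1)}\,d\lambda(y)$. Apply H\"older with exponents $p$ and $p'=p/(p-1)$; since $p>n$ we have $(n-1)p'<n$, so $y\mapsto|x-y|^{-(n-1)}$ lies in $L^{p'}_{\loc}$, and the rearrangement inequality gives $\int_D|x-y|^{-(n-1)p'}\,d\lambda(y)\le\int_{B}|z|^{-(n-1)p'}\,d\lambda(z)$, where $B$ is the ball about the origin with $|B|=|D|$. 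Evaluating this last integral and using $\tfrac{n-(n-1)p'}{np'}=\tfrac1n-\tfrac1p$ yields $\|f\|_{L^\infty(D)}\le C(n,p)\,|D|^{1/n-1/p}\|\nabla f\|_{L^p(D)}$; then $\|f\|_{L^q(D)}\le|D|^{1/q}\|f\|_{L^\infty(D)}$ for every $q\in[1,\infty]$, and since $|df|=|\nabla f|$ this is precisely (ii), with $\delta=C(n,p)^{-1}$ depending only on $n,p$.

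\emph{Part (i).} My plan is to route everything through one \emph{critical} trace--Sobolev inequality and then descend to the subcritical exponents by H\"older. First, record the endpoint estimate
$$\|g\|_{L^{n/(n-1)}(D)}\le c_n\bigl(\|\nabla g\|_{L^1(D)}+\|g\|_{L^1(\partial D)}\bigr),$$
with purely dimensional $c_n$: extend $g$ by $0$ to a $BV$ function on $\mathbb{R}^n$, whose total variation has mass $\|\nabla g\|_{L^1(D)}+\int_{\partial D}|g|\,dS$, and apply the sharp ($BV$, isoperimetric) Sobolev inequality in $\mathbb{R}^n$. Applying this to $g=|f|^{\gamma}$ with $\gamma=\tfrac{p(n-1)}{n-p}$ (so $\gamma\ge1$), using $|\nabla(|f|^\gamma)|\le\gamma|f|^{\gamma-1}|\nabla f|$ and H\"older -- the point of this $\gamma$ being the identities $(\gamma-1)p'=\tfrac{\gamma n}{n-1}=\tfrac{np}{n-p}=:p^{*}$ -- one obtains $\|f\|_{L^{p^*}(D)}^{\gamma}\le c_n\gamma\,\|f\|_{L^{p^*}(D)}^{\gamma-1}\|\nabla f\|_{L^p(D)}+c_n\|f\|_{L^{\gamma}(\partial D)}^{\gamma}$; as $\|f\|_{L^{p^*}(D)}<\infty$ for $f\in C^1(\overline D)$, Young's inequality absorbs the first term on the right and produces
$$\|f\|_{L^{p^*}(D)}\le C(n,p)\bigl(\|\nabla f\|_{L^p(D)}+\|f\|_{L^{p_{*}}(\partial D)}\bigr),\qquad p_{*}:=\gamma=\tfrac{p(n-1)}{n-p},$$
again with dimensional constant. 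H\"older on $D$ then gives $\|f\|_{L^q(D)}\le|D|^{1/q-1/p^*}\|f\|_{L^{p^*}(D)}$, where $1/q-1/p^*=1/n+1/q-1/p>0$ precisely because $q<np/(n-p)$; note $(n-1)q/n=p_{*}q/p^{*}$. For $r\ge p_{*}$ the boundary term is absorbed by H\"older on $\partial D$; for $(n-1)q/n<r<p_{*}$ one must interpolate, bounding $\|f\|_{L^{p_{*}}(\partial D)}$ by $\|f\|_{L^r(\partial D)}^{1-\theta}\|f\|_{W^{1,p}(D)}^{\theta}$ with $\theta<1$ -- it is the hypothesis $r>(n-1)q/n$ that keeps $\theta<1$, so that the boundary $L^r$-norm enters with a genuine positive power -- and then re-absorbing the interior factor via the critical inequality above together with the Poincar\'e--Wirtinger inequality on $D$.

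\emph{An alternative, and the main obstacle.} A cleaner but less quantitative proof of (i) is by contradiction and compactness: were the inequality false there would be $f_k\in C^1(\overline D)$ with $\|f_k\|_{L^q(D)}=1$ while $\|\nabla f_k\|_{L^p(D)}+\|f_k\|_{L^r(\partial D)}\to0$; the Poincar\'e--Wirtinger and Sobolev inequalities bound $\{f_k\}$ in $W^{1,p}(D)$, Rellich--Kondrachov -- available exactly because $q<np/(n-p)$ -- yields a subsequence converging in $L^q(D)$, and $\nabla f_k\to0$ forces the limit to be a constant of $L^q$-norm $1$, hence nonzero; continuity of the trace $W^{1,p}(D)\to L^{p_{*}}(\partial D)$ then forces $\|f_k\|_{L^r(\partial D)}$ to be bounded below, a contradiction. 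The genuine difficulty, in either route, is the \emph{shape-independence} claimed in the statement: the Poincar\'e--Wirtinger constant, the trace constant, and the H\"older factor $|\partial D|^{1/p_{*}-1/r}$ (when $r>p_{*}$) all a priori depend on the geometry of $D$, and a contradiction argument on a single $D$ gives only $\delta=\delta(D)$. Upgrading to $\delta=\delta(|D|,n,p,q,r)$ -- presumably by a symmetrization argument showing the ball is extremal, or by routing through the $\bar\partial$-inequalities developed later in the paper -- together with the precise trace--interpolation inequality in the range $(n-1)q/n<r<p_{*}$, is where I expect the real work to lie.
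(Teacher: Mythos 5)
The paper does not give a proof of Theorem~\ref{thm:real sobolev inequality}: it is quoted as background, with part (i) attributed to Maggi--Villani \cite[Theorem~1.2]{maggi} (see also \cite{Anv}) and part (ii) to \cite[Theorem~7.10]{Gilbarg}. So there is no proof in the paper to compare against; what follows assesses your argument on its own merits.

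Your part (ii) is correct and is precisely the standard Morrey argument behind the cited reference: the potential representation, H\"older with conjugate exponent $p'$, the rearrangement bound (which is Lemma~\ref{lem:interior integral} of the paper), giving $\|f\|_{L^\infty(D)}\le C(n,p)|D|^{1/n-1/p}\|\nabla f\|_{L^p(D)}$, and then $\|f\|_{L^q(D)}\le|D|^{1/q}\|f\|_{L^\infty(D)}$.

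Your part (i) also has the right central idea, and the critical trace inequality $\|f\|_{L^{p^*}(D)}\le C(n,p)\bigl(\|\nabla f\|_{L^p(D)}+\|f\|_{L^{p_*}(\partial D)}\bigr)$ with a purely dimensional constant, obtained by applying the $BV$/isoperimetric Sobolev inequality to $|f|^\gamma$ and absorbing, is exactly the cited theorem of Maggi--Villani. Your self-criticism about where this gets stuck is accurate: for $r>p_*$ the H\"older step on $\partial D$ introduces $|\partial D|^{1/p_*-1/r}$, which is not among the allowed dependencies of $\delta$, and for $(n-1)q/n<r<p_*$ the interpolation you propose is not carried out and would bring in further shape-dependent constants (a Poincar\'e constant, a trace constant). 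As written, the argument establishes the claim only at $r=p_*$; the remaining range of $r$ is genuinely open in your proposal.

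There is one more fundamental obstruction that you do not raise: when $q\neq r$ the inequality in the statement is inhomogeneous in $f$. Plugging in a nonzero constant $f\equiv c$ gives $\delta\,|D|\,c^q\le|\partial D|\,c^r$, which fails as $c\to\infty$ if $q>r$ and as $c\to 0$ if $q<r$; so part (i) as literally written, with $\int_D|f|^q$, $\int_D|\nabla f|^p$, $\int_{\partial D}|f|^r$ rather than the corresponding $L^q$, $L^p$, $L^r$ \emph{norms}, cannot hold for $q\neq r$. (The same remark applies to Theorem~\ref{thm:sobolev inequality} and is consistent with what Lemma~\ref{lem:basic lemma} and Lemma~\ref{lem:basic lemma 2} actually prove, which is the homogeneous bound $\delta\|f\|_{L^q(\Omega)}\le\|\bar\partial f\|_{L^p(\Omega)}+\|f\|_{L^r(\partial\Omega)}$.) Your proposal is really aimed at the homogeneous form $\delta\|f\|_{L^q(D)}\le\|\nabla f\|_{L^p(D)}+\|f\|_{L^r(\partial D)}$, and in that reading it proves the stated $|D|$-only dependence of $\delta$ for $r=p_*$, but not yet for other $r$.
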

    Part (i) (which is called Sobolev Inequality with Trace) is a slight modification (using H\"older's Inequality) of \cite[Theorem 1.2]{maggi} (see also \cite[Formula (2.1)]{Anv}, and part (ii)
   (which is called Morrey's Inequality) is a slight modification of \cite[Theorem 7.10]{Gilbarg}.
    However, the case $p=n$ is more subtle, see \cite[Theorem 7.15]{Gilbarg}. We do not find a version of part (ii) of Theorem \ref{thm:real sobolev inequality} for functions without compact support in the literature. In the present paper, we derive a generalized form  of Theorem \ref{thm:real sobolev inequality} (without giving the explicit form of the constant in the estimates) from analogous estimates for the $\bar\partial$-operator, see Corollary \ref{thm: general sobolev inequality}.\\
    \indent  From the viewpoint of complex analysis, it is very natural to ask whether similar estimates still hold for the $\bar\partial$-operator,
    which is the main topic of the present paper. Our initial motivation to this work is to give an improvement of H\"ormander's $L^2$-estimate of $\bar\partial$ on bounded strictly pseudoconvex domains.
    
    The first main result is the following
    \begin{thm}\label{thm:L^p poincare inequality}
    Let $\Omega\subset\mc^n$ be a bounded domain with Lipschitz boundary, and let $1\leq p<\infty$, then there is a constant $\delta:=\delta(\Omega,n,p)>0$ such that
    $$
    \delta\|f\|_{L^p(\Omega)}\leq \|\bar\partial f\|_{L^p(\Omega)}+\|f\|_{L^p(\partial\Omega)},
    $$
    for all $f\in C^1(\Omega)\cap C^0(\overline\Omega)$ such that $\bar\partial f$ has a continuous continuation to $\overline\Omega$.
    \end{thm}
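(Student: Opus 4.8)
\emph{Proof strategy.} The plan is to derive the estimate from the Bochner--Martinelli--Koppelman (BMK) representation formula, controlling its boundary and interior integrals separately in $L^p$. First I would reduce to the case $f\in C^1(\overline\Omega)$ by a routine approximation: if $\Omega$ is star-shaped about a point $c$, then $f_\varepsilon(z):=f(c+(1-\varepsilon)(z-c))$ lies in $C^1(\overline\Omega)$ for small $\varepsilon>0$, converges to $f$ uniformly on $\overline\Omega$, and --- crucially, since $z\mapsto c+(1-\varepsilon)(z-c)$ is \emph{holomorphic}, so that no $\partial f$ enters the chain rule --- satisfies $\bar\partial f_\varepsilon(z)=(1-\varepsilon)(\bar\partial f)(c+(1-\varepsilon)(z-c))$, which converges to $\bar\partial f$ uniformly on $\overline\Omega$ because $\bar\partial f$ extends continuously there; hence all three integrals in the inequality pass to the limit, and the general case follows by localizing near $\partial\Omega$ into star-shaped pieces. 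For $f\in C^1(\overline\Omega)$ the BMK formula gives, for $z\in\Omega$,
\[
f(z)=\int_{\partial\Omega}f(\zeta)\,K_{BM}(\zeta,z)-\int_{\Omega}\bar\partial f(\zeta)\wedge K_{BM}(\zeta,z),
\]
where $|K_{BM}(\zeta,z)|\le c_n|\zeta-z|^{-(2n-1)}$, so pointwise
\[
|f(z)|\le c_n\int_{\partial\Omega}\frac{|f(\zeta)|}{|\zeta-z|^{2n-1}}\,dS(\zeta)+c_n\int_{\Omega}\frac{|\bar\partial f(\zeta)|}{|\zeta-z|^{2n-1}}\,d\lambda(\zeta)=:c_n\bigl(Bf(z)+I(\bar\partial f)(z)\bigr).
\]

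\emph{The interior term.} The kernel $|\zeta-z|^{-(2n-1)}$ is locally integrable on $\mathbb{C}^n\cong\mathbb{R}^{2n}$ and $\int_{\Omega}|\zeta-z|^{-(2n-1)}\,d\lambda(\zeta)\le c_n'\operatorname{diam}\Omega$ uniformly in $z$ (and symmetrically in $\zeta$); so Young's inequality yields $\|I(\bar\partial f)\|_{L^p(\Omega)}\le c_n'(\operatorname{diam}\Omega)\,\|\bar\partial f\|_{L^p(\Omega)}$ for every $1\le p<\infty$.

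\emph{The boundary term --- the crux.} Here $\zeta$ runs over the $(2n-1)$-dimensional manifold $\partial\Omega$ and $z$ over $\Omega$, and the kernel is exactly borderline: with $\rho(z):=\operatorname{dist}(z,\partial\Omega)$ one has $\int_{\partial\Omega}|\zeta-z|^{-(2n-1)}\,dS(\zeta)\lesssim_{\Omega}1+\log_+\frac1{\rho(z)}$, which blows up as $z\to\partial\Omega$; thus Minkowski's integral inequality only handles $p<\frac{2n}{2n-1}$ (for such $p$, $\bigl\||\cdot-\zeta|^{-(2n-1)}\bigr\|_{L^p(\Omega)}<\infty$ --- a useful warm-up), and the plain Schur test fails. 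To reach all $p\in(1,\infty)$ I would run the \emph{weighted} Schur test with weight $h(z)=\rho(z)^{-a}$ on $\Omega$, for a fixed $a\in(0,1/p)$, against the weight $1$ on $\partial\Omega$: the two balance conditions
\[
\int_{\partial\Omega}\frac{dS(\zeta)}{|\zeta-z|^{2n-1}}\le A\,\rho(z)^{-ap'}\quad(z\in\Omega),\qquad
\int_{\Omega}\frac{\rho(z)^{-ap}}{|\zeta-z|^{2n-1}}\,d\lambda(z)\le B\quad(\zeta\in\partial\Omega),
\]
with $p'=p/(p-1)$, hold --- the first because $r\mapsto r^{ap'}\bigl(1+\log_+\frac1r\bigr)$ is bounded on $(0,\operatorname{diam}\Omega]$, the second by a computation in $C^1$ boundary charts which is finite precisely when $ap<1$, uniformly over $\partial\Omega$ by compactness. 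Hence $\|Bf\|_{L^p(\Omega)}\le A^{1/p'}B^{1/p}\,\|f\|_{L^p(\partial\Omega)}$; the case $p=1$ is immediate from Fubini (integrate in $z$ first). Equivalently one could slice $\Omega$ along the level sets of $\rho$, apply Young's inequality on each slice to gain the factor $1+\log\frac1t$, and integrate, using $\int_0^\delta\bigl(1+\log\frac1t\bigr)^p\,dt<\infty$.

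\emph{Conclusion.} Combining, $\|f\|_{L^p(\Omega)}\le C(\Omega,n,p)\bigl(\|\bar\partial f\|_{L^p(\Omega)}+\|f\|_{L^p(\partial\Omega)}\bigr)$; raising to the $p$-th power and using $(s+t)^p\le 2^{p-1}(s^p+t^p)$ gives $\int_{\Omega}|f|^p\,d\lambda\le C'\bigl(\int_{\Omega}|\bar\partial f|^p\,d\lambda+\int_{\partial\Omega}|f|^p\,dS\bigr)$, i.e.\ the theorem with $\delta:=1/C'$. I expect the one genuinely non-routine point to be the treatment of the logarithmically divergent boundary kernel in the third step, tamed by the weight $\rho^{-a}$ (or, equivalently, by the slicing estimate); the Bochner--Martinelli computation and the reduction to $f\in C^1(\overline\Omega)$ are standard.
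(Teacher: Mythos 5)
Your proposal is correct, and it starts from the same place as the paper: the Bochner--Martinelli representation, which splits $f$ into a boundary integral $Bf$ and an interior integral $I(\bar\partial f)$. After that the two arguments diverge in how they bound the boundary piece. The paper (Lemma \ref{lem:basic lemma 2}, specialized to $q=r=p$) factors the kernel itself, $|\xi-z|^{-(2n-1)}=|\xi-z|^{b}\cdot|\xi-z|^{-(2n-1)-b}$, and applies H\"older so that one factor is integrated against $|f|^p$ over $\partial\Omega$ and the other produces the quantity $N(\cdot,\partial\Omega)$; after Fubini the first factor gives $M(2n+pb,\Omega)$, and $b$ is chosen in the (always nonempty, for $q=r=p$) window $-2n/p<b<(1-2n)/p$ making both finite. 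You instead run a weighted Schur test with weight $h(z)=\rho(z)^{-a}$, $0<a<1/p$, on the domain side and trivial weight on the boundary side; the two balance conditions you write down are correct and verified by exactly the local computation you describe (the boundary integral of the kernel is $O(1+\log_+(1/\rho(z)))$, and $\int_\Omega|\xi-z|^{-(2n-1)}\rho(z)^{-ap}\,d\lambda(z)$ is finite iff $ap<1$). Both weightings work; yours is tailored to the diagonal case $p=q=r$ and is perhaps more transparent for that case, while the paper's $|\xi-z|^{b}$ factorization scales cleanly to the off-diagonal $(p,q,r)$ inequality of Theorem \ref{thm:sobolev inequality}, of which this theorem is the special case. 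You also slightly overstate the delicacy of the boundary term: in the paper's scheme the boundary and interior pieces are handled by the identical H\"older device and the constraint for the boundary piece ($q(2n-1)<2nr$, i.e.\ $2n-1<2n$ for $q=r=p$) is vacuous, so the boundary term is not genuinely harder than the interior one here.

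Two small remarks. First, the preliminary reduction to $f\in C^1(\overline\Omega)$ via star-shaped dilations and localization is unnecessary: the Bochner--Martinelli formula already holds for $f\in C^1(\Omega)\cap C^0(\overline\Omega)$ with $\bar\partial f$ extending continuously to $\overline\Omega$ (this is the form cited in the paper from Henkin--Leiterer and Adachi), which is precisely the regularity class in the statement. If you do insist on the reduction, the localization step needs care, since cutting $f$ off by a partition of unity changes $\bar\partial f$ by lower-order terms; it is cleaner to cite the formula directly. Second, what you call Young's inequality for the interior term is really the unweighted Schur test on the bounded domain (both iterated $L^1$ norms of the kernel are uniformly bounded by $\mathrm{diam}\,\Omega$ up to a dimensional constant), which is fine but worth stating as such.
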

    In the above theorem, $C^1(\Omega)$ denotes the space of complex valued functions which are of class $C^1$ on $\Omega$, $C^0(\overline{\Omega})$ represents the space of  complex valued continuous functions on $\overline{\Omega}$, and we identify $\bar{\partial}f$ with $\left(\frac{\partial f}{\partial \bar{z}_1},\cdots,\frac{\partial f}{\partial\bar{z}_n}\right)$.\\
    \indent For the case $p=2$, Theorem \ref{thm:L^p poincare inequality} is a $\bar\partial$-version of the Poincar\'e inequality presented
    as in Theorem \ref{thm:real poincare inequality}. If $p=2$ and $f$ is real valued, Theorem \ref{thm:L^p poincare inequality} is a consequence of Theorem \ref{thm:real poincare inequality}
    since in this case $|\nabla f|=2|\bar\partial f|.$ If $p=2$ and $f$ has compact support,  Theorem \ref{thm:L^p poincare inequality} is also a consequence of Theorem \ref{thm:real poincare inequality}
    as
    $$
    \int_{\Omega}\left|\frac{\partial f}{\partial z_j}\right|^2d\lambda=\int_{\Omega}\left|\frac{\partial f}{\partial \bar{z}_j}\right|^2d\lambda,
    \ \forall\ 1\leq j\leq n.
    $$
    When $\Omega$ is strictly pseudoconvex with $C^2$-boundary and $p=2$, the result of Theorem \ref{thm:L^p poincare inequality} is already known (e.g. see the proof of \cite[Proposition 2.2.2]{Ber95}). The general case is beyond the scope of Theorem \ref{thm:real poincare inequality} since one cannot control the norm of $\bar\partial f$ in term of that of
    $\nabla f$, and beyond the discussion in \cite{Ber95} since the strict pseudoconvexity condition is indispensable for the arguments there. \\
  \indent  We don't know whether Theorem \ref{thm:L^p poincare inequality} still holds or not if $p=\infty$.\\
    \indent Parallel to Theorem \ref{thm:real sobolev inequality}, we also consider a generalization of Theorem \ref{thm:L^p poincare inequality} as follows.
    \begin{thm}\label{thm:sobolev inequality}
        Let $\Omega\subset\mc^n$ be a bounded domain with Lipschitz boundary
        and let $p,q,r$ satisfy
        $$
         1\leq p<\infty,\ 1\leq q<\infty,\ 1\leq r<\infty,\ q(2n-p)<2np,\ q(2n-1)<2nr,
        $$
        then there is a constant $\delta:=\delta(\Omega,p,q,r)>0$ such that
    $$
    \delta\|f\|_{L^q(\Omega)}\leq \|\bar\partial f\|_{L^p(\Omega)}+\|f\|_{L^r(\partial\Omega)},
    $$
    for all $f\in C^1(\Omega)\cap C^0(\overline\Omega)$ such that $\bar\partial f$ has a continuous continuation to $\overline\Omega.$
   \end{thm}
   Compared to part (i) of Theorem \ref{thm:real sobolev inequality}, we don't need to assume that $p<2n$ in Theorem
   \ref{thm:sobolev inequality}.\\
   \indent  The constant $\delta$ in the above theorem can be written explicitly by tracking our proof,  and all constants in the following theorems and corollaries can be chosen similarly.
   For the sake of simplicity, we will not repeat this argument.
   \begin{remark}\label{remark:sobolev}
    Since $C^1(\overline\Omega)$ is dense in the Sobolev space $W^{1,p}(\Omega)$ for any $1\leq p<\infty$,
   see \cite[Theorem 3, Subsection 5.3, Page 252]{Evans}, and the trace operator $\tr\colon W^{1,r}(\Omega)\rw L^r(\partial\Omega)$
   is continuous, see \cite[Theorem 1, Subsection 5.5, Page 258]{Evans}, then Theorem \ref{thm:sobolev inequality} can be easily
   generalized to functions in the Sobolev space $W^{1,p}(\Omega)\cap W^{1,r}(\Omega)$ by tracking our proof.
   \end{remark}
   \indent  Now we discuss the main idea of the proof of Theorem \ref{thm:sobolev inequality}. Although Theorem \ref{thm:sobolev inequality}
    looks similar to Theorem \ref{thm:real sobolev inequality} in form, it seems difficult to modify the idea in the proofs of Theorem \ref{thm:real poincare inequality}
     and Theorem \ref{thm:real sobolev inequality} in the literature to give a proof of Theorem \ref{thm:sobolev inequality}. The key ingredients in our arguments for Theorem \ref{thm:sobolev inequality}
    are Bochner-Martinelli Formula and H\"older's Inequality for three functions.\\
    \indent Based on  a principle that connects convex analysis and complex analysis (proposed in \cite{DHJQ}), we derive the following result from Theorem \ref{thm:sobolev inequality}.
     \begin{cor}\label{thm: general sobolev inequality}
        Let $D\subset\mr^n$ be a bounded domain with Lipschitz boundary and let $p,q,r$ satisfy
        $$
         1\leq p<\infty,\ 1\leq q<\infty,\ 1\leq r< \infty,\ q(2n-p)<2np,\ q(2n-1)<2nr,
        $$
        then there is a constant $\delta:=\delta(D,p,q,r)>0$ such that
    $$
    \delta\|f\|_{L^q(D)}\leq \|\nabla f\|_{L^p(D)}+\|f\|_{L^r(\partial D)},\ \forall f\in C^1(\overline D).
    $$
   \end{cor}
   \indent We now discuss the relationships between Theorem \ref{thm:sobolev inequality}, Corollary \ref{thm: general sobolev inequality} and Theorem \ref{thm:real sobolev inequality}.
   Clearly, for domains $D\subset\mc^n\cong \mr^{2n}$, Theorem \ref{thm:sobolev inequality} can be seen as a generalization of Theorem \ref{thm:real sobolev inequality}
   (overlooking the estimates of the constants in the inequalities). Corollary \ref{thm: general sobolev inequality} is stronger than Theorem \ref{thm:real sobolev inequality}
    in the sense that $p$ can be larger than $n$ and $f$ can be taken to have noncompact support,
    and a little bit weaker than Theorem \ref{thm:real sobolev inequality} in the sense that the condition in Corollary \ref{thm: general sobolev inequality} may lead to a narrower range of $p,q,r$. \\
    \indent  In connection to Remark \ref{remark:sobolev}, Corollary \ref{thm: general sobolev inequality} can be generalized to Sobolev spaces
    accordingly.\\
    \indent For any domain $D\subset\mr^n$, let $d\lambda$ be the Lebesgue measure on $D$,  and let $dS$ be the surface measure on the boundary $\partial D$ of $D$ if $D$ has 
    Lipschitz boundary. For application to $L^2$-estimate of $\bar\partial$ on strictly pseudoconvex domains, we need certain weighted version of Theorem \ref{thm:sobolev inequality} as follows.
    \begin{thm}\label{thm:poincare inequality with weight}
        Let $\Omega\subset\mc^n$ be a bounded domain with a $C^2$-boundary defining function $\rho$.
        Set 
        $$K:=\frac{\sup_{z\in \Omega}|\nabla\rho(z)|+\sup_{z\in \Omega}|\Delta\rho(z)|}{\inf_{x\in\partial \Omega}|\nabla\rho(z)|},$$
         where $\Delta\rho$ is the Laplacian of $\rho$.
        Assume $\varphi\in C^0(\overline\Omega),$  and set 
        $$m_1:=\inf_{z\in \Omega}e^{-\varphi(x)},\ m_2:=\sup_{z\in \Omega}e^{-\varphi(z)},$$
        then there is a constant $\delta:=\delta(|\Omega|,|\partial\Omega|,K,m_1,m_2,n)>0$ such that
        $$
        \delta \int_{\Omega}|f|^2e^{-\varphi}d\lambda\leq \int_{\Omega}|\bar\partial f|^2e^{-\varphi}d\lambda+\int_{\partial\Omega}|f|^2e^{-\varphi}d\lambda,
        \ \forall f\in C^1(\overline\Omega).
        $$
   \end{thm}
    Theorem \ref{thm:poincare inequality with weight} is a direct consequence of Theorem \ref{thm:L^p poincare inequality} as the constant can be tracked, so we omit it here.\\
    \indent We now consider applications of Theorem \ref{thm:sobolev inequality} and Theorem \ref{thm:poincare inequality with weight} to complex analysis.
    Firstly, a direct interesting consequence of Theorem \ref{thm:sobolev inequality} is the following Corollary \ref{cor:maximum modulus}, which
    can be seen as an integral form of the Maximum Modulus Principle for holomorphic functions.
    \begin{cor}\label{cor:maximum modulus}
        Let $\Omega\subset\mc^n$ be a bounded domain with Lipschitz boundary and let $p,q$ satisfy
        $$
        1\leq p<\infty,\ 1\leq q<\infty,\ q(2n-1)<2np,
        $$
         then there is a constant $\delta:=\delta(\Omega,p,q)>0$ such that
        $$
        \delta\int_{\Omega}|f|^qd\lambda\leq \int_{\partial\Omega}|f|^pdS,\ \forall f\in \mo(\Omega)\cap C^0(\overline{\Omega}),
        $$
        where $\mo(\Omega)$ denotes the space of holomorphic functions on $\Omega$.
    \end{cor}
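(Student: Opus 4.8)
The plan is to obtain Corollary \ref{cor:maximum modulus} as an immediate consequence of Theorem \ref{thm:sobolev inequality}: for a holomorphic $f$ one has $\bar\partial f\equiv 0$, so the interior term $\int_\Omega|\bar\partial f|^pd\lambda$ on the right-hand side of the Sobolev-type estimate vanishes, and what survives is precisely the asserted inequality. The only thing to verify is that the exponents $p,q$ allowed by the corollary fit into the admissible range of Theorem \ref{thm:sobolev inequality}; since the exponent attached to $\bar\partial f$ there plays no role once $\bar\partial f=0$, we have the freedom to choose it as convenient.

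Concretely, I would fix $p,q$ with $1<p<\infty$, $1\le q<\infty$, $q(2n-1)<2np$, and apply Theorem \ref{thm:sobolev inequality} to the triple $(\tilde p,q,r):=(2nq,\,q,\,p)$. The admissibility conditions are then checked one by one: $1\le\tilde p=2nq<\infty$; since $q\ge 1$ we have $\tilde p\ge 2n$, hence $q(2n-\tilde p)\le 0<2n\tilde p$; $1<r=p<\infty$ by hypothesis; and $q(2n-1)<2nr=2np$ is exactly the hypothesis of the corollary. Theorem \ref{thm:sobolev inequality} thus yields a constant $\delta=\delta(\Omega,n,p,q)>0$ (the dependence on $\tilde p$ and $r$ being a dependence on $n,q$ and $p$) such that
\[
\delta\int_{\Omega}|f|^qd\lambda\le \int_{\Omega}|\bar\partial f|^{\tilde p}d\lambda+\int_{\partial\Omega}|f|^pdS
\]
for every $f\in C^1(\Omega)\cap C^0(\overline\Omega)$ whose $\bar\partial f$ admits a continuous continuation to $\overline\Omega$.

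To finish I would simply note that any $f\in\mo(\Omega)\cap C^0(\overline\Omega)$ is smooth on $\Omega$, so it belongs to $C^1(\Omega)\cap C^0(\overline\Omega)$, and $\bar\partial f\equiv 0$ trivially extends continuously to $\overline\Omega$; hence $f$ is an admissible test function in the displayed inequality. Substituting $\bar\partial f=0$ collapses it to $\delta\int_\Omega|f|^qd\lambda\le\int_{\partial\Omega}|f|^pdS$, which is the claim. There is no genuine obstacle in this argument — the entire difficulty is already contained in Theorem \ref{thm:sobolev inequality} — and the only mild subtlety is the bookkeeping of the coupling condition $q(2n-\tilde p)<2n\tilde p$, which is the reason for choosing $\tilde p$ large; any value exceeding $\tfrac{2nq}{2n+q}$ (and at least $1$) would serve equally well, the precise choice being immaterial once $\bar\partial f=0$.
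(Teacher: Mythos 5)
Your proof is correct and follows exactly the route the paper intends: the paper presents Corollary \ref{cor:maximum modulus} as a ``direct interesting consequence'' of Theorem \ref{thm:sobolev inequality} without supplying details, and your choice $(\tilde p,q,r)=(2nq,q,p)$ together with the observation that $\bar\partial f\equiv 0$ for holomorphic $f$ correctly verifies all the admissibility conditions and kills the interior term.
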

    \indent Secondly, we derive an improvement of H\"ormander's $L^2$-estimate of $\bar\partial$ (see \cite[Theorem 1.6.4]{Ber95} for an appropriate formulation) when $\Omega$ is strictly pseudoconvex from Theorem \ref{thm:poincare inequality with weight}. Note that in Theorem 1.6.4 of \cite{Ber95}, the right hand side of Inequality (\ref{formula:L2 estimate}) is 
    $$\int_{\Omega}\sum_{j,k=1}^n \psi^{j\bar k}f_j\bar{f}_ke^{-\varphi-\psi}d\lambda.$$
    \begin{thm}\label{thm:strictly pseudoconvex}
        Let $\Omega\subset \mc^n$ be a bounded strictly pseudoconvex domain with a $C^2$-boundary defining function
        $\rho,$ and let $c_0$ be the smallest eigenvalue of the complex Hessian $\left(\frac{\partial^2\rho}{\partial z_j\partial\bar{z}_k}\right)$ of $\rho$ on $\partial\Omega$.
        Let $\varphi\in C^2(\overline\Omega)$ be a plurisubharmonic function, and let $\psi\in C^2(\overline\Omega)$ be a strictly plurisubharmonic function.
        Let
        $$K:=\frac{\sup_{z\in\Omega}|\nabla \rho(z)|+\sup_{z\in\Omega}|\Delta\rho(z)|}{\inf_{z\in\partial\Omega}|\nabla \rho(z)|},$$
        and set
        $$m_1:=\inf_{z\in\Omega}e^{-\varphi(z)-\psi(z)},\ m_2:=\sup_{z\in\Omega}e^{-\varphi(z)-\psi(z)},
        \ m_3:=\inf_{z\in \partial\Omega}\frac{c_0(z)}{|\nabla\rho(z)|}.$$
        Then there is a constant $\delta:=\delta(|\Omega|,|\partial\Omega|,m_1,m_2,m_3,n)>0$ such that for any nonzero $\bar{\partial}$-closed Lebesgue measurable $(0,1)$-form $f:=\sum_{j=1}^n f_jd\bar{z}_j$
         satisfying
        $$
        M_f:=\int_{\Omega}\sum_{j,k=1}^n \psi^{j\bar k}f_j\bar{f}_ke^{-\varphi-\psi}d\lambda<\infty,
        $$
        we can solve $\bar{\partial}u=f$ with the estimate
        \begin{equation}\label{formula:L2 estimate}
        \int_{\Omega}|u|^2e^{-\varphi-\psi}d\lambda\leq\frac{\|f\|_{2}}{\sqrt{\|f\|_{2}^2+\delta M_f}}\int_{\Omega}\sum_{j,k=1}^n \psi^{j\bar k}f_j\bar{f}_ke^{-\varphi-\psi}d\lambda,
        \end{equation}
        where $(\psi^{j\bar k})_{1\leq j,k\leq n}$ is the inverse of the complex Hessian of $\psi$,
        $$
        \|f\|_{2}^2:=\sum_{j=1}^n\int_{\Omega}|f_j|^2e^{-\varphi-\psi}d\lambda.$$
    \end{thm}
    \indent  The main idea of the proof of Theorem \ref{thm:strictly pseudoconvex} can be summarized as follows.
     Based on Theorem \ref{thm:poincare inequality with weight}, we apply the last two terms on the right hand side of the Kohn-Morrey-H\"ormander formula (see
     Lemma \ref{basic identity}), which is  thrown away in the classical method to $L^2$-estimate of $\bar\partial$, to deduce a better
      estimate when $\Omega$ is strictly pseudoconvex.\\
     \indent Theorem \ref{thm:strictly pseudoconvex} can be easily generalized to any $(p,q)$-forms and any bounded $q$-pseudoconvex domains. But we don't pursue this here.\\
    \indent If we take $\psi(z)=\frac{|z-z_0|^2}{R^2}$ for some $z_0\in\Omega$, where $R$ is the diameter of $\Omega$, then Theorem \ref{thm:strictly pseudoconvex} gives the following
    \begin{cor}\label{cor:strictly pseudoconvex1}
    Let $\Omega\subset \mc^n$  be a bounded strictly pseudoconvex domain with a $C^2$-boundary defining function
        $\rho,$ and let $c_0$ be the smallest eigenvalue of the complex Hessian $\left(\frac{\partial^2\rho}{\partial z_j\partial\bar{z}_k}\right)$ of $\rho$ on $\partial\Omega$.
        Let $R$ be the diameter of $\Omega$, 
        $$K:=\frac{\sup_{z\in\Omega}|\nabla \rho(z)|+\sup_{z\in\Omega}|\Delta\rho(z)|}{\inf_{z\in\partial\Omega}|\nabla \rho(z)|},$$
        and let $\varphi\in C^2(\overline\Omega)$ be a plurisubharmonic function.  Set 
        $$m_1:=\inf_{z\in\Omega}e^{-\varphi(z)},\ m_2:=\sup_{z\in\Omega}e^{-\varphi(z)},\ m_3:=\inf_{z\in \partial\Omega}\frac{c_0(z)}{|\nabla\rho(z)|}$$ Then there exists a constant $\delta:=\delta(|\Omega|,|\partial\Omega|,K,m_1,m_2,m_3,n)>0$  such that for any nonzero $\bar{\partial}$-closed Lebesgue measurable $(0,1)$-form $f$ satisfying   $$
        \int_{\Omega}|f|^2e^{-\varphi}d\lambda<\infty,
        $$
        we can solve $\bar{\partial}u=f$ with the estimate
        \begin{equation}\label{inequality:2}
        \int_{\Omega}|u|^2e^{-\varphi}d\lambda\leq\frac{eR^2}{\sqrt{1+\delta}}\int_{\Omega}|f|^2e^{-\varphi}d\lambda.
        \end{equation}
    \end{cor}
    \indent Corollary \ref{cor:strictly pseudoconvex1} is an improvement of \cite[Theorem 2.2.3]{Hor65}
    in the sense that the constant $eR^2$ is improved to $\frac{eR^2}{\sqrt{1+\delta}}$ in Inequality (\ref{inequality:2})  if $\Omega$ is strictly pseudoconvex and $\varphi\in C^2(\overline\Omega)$.
    The constant $\delta$  can be also explicitly computed. One may wonder whether the constant $\delta_0$ in Corollary \ref{cor:strictly pseudoconvex1} can be chosen to be independent of $\varphi$.
    But this is impossible by the main result in \cite{DNW}.\\
    \indent It is of great importance and interest to consider optimal constants in the estimates presented in all above theorems and corollaries,
    but we do not discuss this topic in depth in the present work.
    It is possible to generalize Theorem \ref{thm:L^p poincare inequality} and Theorem \ref{thm:sobolev inequality} to the Laplace operator and other differential operators on 
    more general spaces, which will be systematically studied in forthcoming works.
    
   \subsection*{Acknowledgements}
    		The first author is grateful to Professor Xiangyu Zhou and Yuan Zhou for valuable discussions on related topics.
    		This research is supported by National Key R\&D Program of China (No. 2021YFA1003100),
            NSFC grants (No. 11871451, 12071310), and the Fundamental Research Funds for the Central Universities.
    \section{Notations and Conventions}\label{sec:notations}
    In this section, we fix some notations and conventions. \\
    \indent Our convention for $\mathbb{N}$ is that $\mathbb{N}:=\{1,2,3,\cdots\}$, we also set $\mc^*:=\mc\setminus\{0\}.$ For any complex number $z\in \mc$,
    let $\re(z)$ denote its real part and $\im(z)$ denote its imaginary part. For any $z\in\mc^n$, we always write $z=(z_1,\cdots,z_n)$.
    Similarly, for any $x\in\mr^n$, we always write $x=(x_1,\cdots,x_n)$. Let $\omega_n$ be the surface area of the unit sphere $\mathbb{S}^n:=\{x\in\mr^n|\ |x|=1\}$. \\
    \indent We say $D\subset\mr^n$ is a domain if $D$ is a connected open subset of $\mr^n$. For any domain $D\subset\mr^n$, let $\overline{D}$ denote the closure of $D$ in $\mr^n$,
     and let $\partial D:=\overline{D}\setminus D$ be its boundary. For any $k\in\mathbb{N}\cup \{0\}$, let $C^k(D)$ be the space of complex valued functions on
     $D$ which are of class $C^k$, and let $C^k(\overline D)$ denote the space of complex valued functions which are of class $C^k$ in an open neighborhood of $\overline D$.
     We also set
     $$C^k(\overline D;\mr^n):=\{F=(f_1,\cdots,f_n)\colon \overline D\rw \mr^n|\ f_j\in C^k(\overline D),\ \forall j=1,\cdots,n\}.$$
     For any $f\in C^1(\overline D)$, let $\nabla f$ denote its gradient. For any $f\in C^2(D)$, let $\Delta f$ denote its Laplacian.\\
     \indent Let $d\lambda$ denote the Lebesgue measure on $\mr^n$, and let $|A|$ denote the measure of a Lebesgue measurable subset $A\subset\mr^n$.  If a domain $D\subset \mr^n$ has Lipschitz boundary, let $dS$ denote the ($(n-1)$-)dimensional Hausdorff measure defined on $\partial D$, and we also let $|\partial D|$ be the Hausdorff measure of $\partial D$.\\
     \indent Let $\Omega\subset\mc^n$ be a domain, let $\mo(\Omega)$ denote the space of holomorphic functions on $\Omega$.
     We recall a real valued function $\varphi\in C^2(\Omega)$ is plurisubharmonic (resp. strictly plurisubharmonic) if its complex Hessian $\left(\varphi_{j\bar{k}(z)}\right)_{1\leq j,k\leq n}$ is positive semi-definite (resp. positive definite) for any $z\in \Omega$, where
     $$
     \varphi_{j\bar{k}}(z):=\frac{\partial^2\varphi}{\partial z_j\partial\bar z_k}(z),\ 1\leq j,k\leq n,
     $$
     and we let $\left(\varphi^{j\bar{k}}\right)_{1\leq j,k\leq n}$ be the inverse matrix of its complex Hessian if $\varphi$ is strictly plurisubharmonic.
     We also recall that a domain $\Omega\subset\mc^n$ is strictly pseudoconvex if it has a $C^2$-boundary defining function $\rho$
    such that $(\rho_{j\bar k}(p))_{1\leq j,k\leq n}$  is positive definite when it restricts to the hyperplane defined by
    $$
    \{(t_1,\cdots,t_n)\in\mc^n|\ \sum_{j=1}^nt_j\frac{\partial\rho}{\partial z_j}(p)=0\}
    $$
    for any $p\in \partial\Omega$.\\
     \indent Let $D\subset\mr^n$ be a domain. For $1\leq p\leq \infty$,
     let $L^p(D)$ denote the space of $L^p$-integrable complex valued functions on $D$, and for any $f\in L^p(D)$,  let $\|f\|_{L^p(D)}$ denote the $L^p$-norm of $f$.
     Similarly, we obtain $L^p(\partial D)$ and $\|f\|_{L^p(\partial D)}$ if $D$ has Lipschitz boundary. Moreover, we set
     $$W^{1,p}(D):=\{f\in L^p(D)|\ \frac{\partial f}{\partial x_j}\in L^p(D)\text{ for any }1\leq j\leq n\},$$
     and let $\tr\colon W^{1,p}(D)\rw L^p(\partial D)$ be the trace operator if $D$ is bounded.\\
     \indent Let $\Omega\subset\mc^n$ be a domain, and let $\varphi\in C^2(\overline\Omega)$. We denote by $H_1:=L^2(\Omega,\varphi)$ the Hilbert space of functions on $\Omega$ which are square integrable with respect to $e^{-\varphi}d\lambda$, and $H_2$ (resp. $H_3$) the Hilbert space of forms of type $(0,1)$ (resp. type $(0,2)$) with coefficients in $H_1$. It is clear that
    $$
    T_\varphi\colon H_1\rw H_2,\ u\rwo \bar{\partial}u
    $$
    and
    $$S_\varphi\colon H_2\rw H_3,\ u\rwo \bar{\partial}u
    $$
    are unbounded densely defined, closed operators. Let $T_\varphi^*$ (resp. $S_\varphi^*$) denote the formal adjoint of $T_\varphi$ (resp. $S_\varphi$), and let $\dom(T_\varphi^*)$
    (resp. $\dom(S_\varphi)$) denote the domain of $T_\varphi^*$ (resp. $S_\varphi$). For any element $\alpha\in H_j$, $\|\alpha\|_j$ denotes the norm of $\alpha$
    in $H_j$. Similarly,  $\langle \alpha,\beta\rangle_j$ denotes the corresponding  inner product of $H_j$ for any $\alpha,\ \beta\in H_j$.
    We also let $C_{(0,1)}^1(\overline{\Omega})$ denote the space of forms of type $(0,1)$ with coefficients in $C^1(\overline{\Omega})$.
    For any $\alpha\in \dom(T_\varphi^*)\cap \dom(S_\varphi)$, the graph norm of $\alpha$ is defined by
    $$
    \left(\|\alpha\|_2^2+\|T_\varphi^*\alpha\|_1^2+\|S_\varphi\alpha\|_3^2\right)^{\frac{1}{2}}.
    $$
    \section{The proof of Theorem \ref{thm:sobolev inequality}}\label{section: sobolev}
      To prove Theorem \ref{thm:sobolev inequality}, we make some preparations.\\
      \indent The following lemma is well known, we also contain a proof of it for completeness.
    \begin{lem}\label{lem:interior integral}
    Let $D\subset\mr^n$ be a bounded domain, and let $0<a\leq n$,  then for any $x\in \mr^n$, we have
    $$\int_{D}|x-y|^{-n+a}d\lambda(y)\leq \int_{B(x,R)}|x-y|^{-n+a}d\lambda(y)=\frac{\omega_n}{a}\left(\frac{n|D|}{\omega_n}\right)^{\frac{a}{n}},
    $$
    where $B(x,R)$ is a ball in $\mr^n$ with center at $x$ and radius $R$ whose volume is equal to $|D|.$
    \end{lem}
    \begin{proof}
     We have
     \begin{align*}
    &\quad \int_{D}|x-y|^{-n+a}d\lambda(y)\\
    &=\int_{D-B(x,R)}|x-y|^{-n+a}d\lambda(y)+\int_{D\cap B(x,R)}|x-y|^{-n+a}d\lambda(y)\\
    &\leq R^{-n+a}|D-B(x,R)|+ \int_{D\cap B(x,R)}|x-y|^{-n+a}d\lambda(y)\\
    &\leq \int_{B(x,R)-D}|x-y|^{-n+a}d\lambda(y)+\int_{D\cap B(x,R)}|x-y|^{-n+a}d\lambda(y)\\
    &=\int_{B(x,R)}|x-y|^{-n+a}d\lambda(y)=\omega_n\int_0^{R}r^{-n+a}r^{n-1}dr=\frac{R^a}{a}\omega_n\\
    &=\frac{\omega_n}{a}\left(\frac{n|D|}{\omega_n}\right)^{\frac{a}{n}}.
     \end{align*}
    \end{proof}
    \indent See \cite{Grisvard} for the following well known lemma.
    \begin{lem}\cite[Lemma 1.5.1.9]{Grisvard}\label{lem:normal vector}
    Let $D\subset\mr^n$ be a bounded domain with a Lipschitz boundary defining function $\rho,$  then there is 
    $F\in C^1(\overline D;\mr^n)$ such that 
    $$F(x)\cdot \frac{\nabla\rho(x)}{|\nabla\rho(x)|}\geq 1$$
     for almost every $x\in\partial D$.
    \end{lem}
    In the following, we always choose an $F$ as in Lemma \ref{lem:normal vector}. If $D$ has a $C^2$-boundary defining function $\rho$, we may choose 
    $$F:=\frac{\nabla\rho}{\inf_{x\in\partial D}|\nabla \rho(x)|}$$.
    \indent We also use the following lemma
    \begin{lem}\label{lem:n integral}
    Let $D\subset\mr^n$ be a bounded domain with a Lipschitz boundary defining function $\rho$, then we have
    $$\int_{\partial D}|\tr(f)|dS\leq K\left(\int_D|\nabla f|d\lambda+\int_{D}|f|d\lambda\right),\ \forall f\in W^{1,1}(D),$$
    where
    $$K:=\sup_{x\in D}|F(x)|+\sup_{x\in D}|\nabla\cdot F(x)|.$$
    \end{lem}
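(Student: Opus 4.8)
The plan is to deduce this boundary trace estimate from the divergence theorem applied to a suitable vector field built from the defining function $\rho$. The key observation is that, since $\rho$ is a $C^2$-defining function, the outward unit normal to $\partial D$ at a boundary point $x$ is $\nu(x) = \nabla\rho(x)/|\nabla\rho(x)|$. Hence for a scalar function $g\in C^1(\overline D)$ the classical Gauss--Green formula gives
$$
\int_{\partial D} g\,\frac{\partial\rho}{\partial\nu}\,\frac{dS}{|\nabla\rho|} = \int_{\partial D} g\,dS \quad\text{is not quite it;}
$$
rather, I would integrate the vector field $g\,\nabla\rho$ over $D$: $\operatorname{div}(g\,\nabla\rho) = \nabla g\cdot\nabla\rho + g\,\Delta\rho$, so that
$$
\int_{\partial D} g\,(\nabla\rho\cdot\nu)\,dS = \int_D \big(\nabla g\cdot\nabla\rho + g\,\Delta\rho\big)\,d\lambda.
$$
On $\partial D$ one has $\nabla\rho\cdot\nu = |\nabla\rho|$, which is bounded below by $\inf_{x\in\partial D}|\nabla\rho(x)|>0$ (this is where the defining-function hypothesis, in particular $\nabla\rho\neq 0$ on $\partial D$, enters). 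Therefore, choosing $g = |f|$ (after the usual regularization, since $|f|$ is only Lipschitz in $f$), we get
$$
\Big(\inf_{x\in\partial D}|\nabla\rho|\Big)\int_{\partial D}|f|\,dS \le \int_{\partial D}|f|\,|\nabla\rho|\,dS = \int_D\big(\nabla|f|\cdot\nabla\rho + |f|\,\Delta\rho\big)\,d\lambda,
$$
and then $|\nabla|f||\le|\nabla f|$ together with $|\nabla\rho|\le\sup_D|\nabla\rho|$ and $|\Delta\rho|\le\sup_D|\Delta\rho|$ yields the claimed bound with the stated constant $K$.

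The order of steps I would carry out: first, reduce to $f\in C^1(\overline D)$ by density of $C^1(\overline D)$ in $W^{1,1}(D)$ and continuity of the trace operator $\operatorname{tr}\colon W^{1,1}(D)\to L^1(\partial D)$ (both cited already in the excerpt from \cite{Evans}); once the inequality holds on the dense subspace with a constant depending only on $K$, it passes to all of $W^{1,1}(D)$. Second, for $f\in C^1(\overline D)$, approximate $|f|$ by $\sqrt{|f|^2+\varepsilon^2}-\varepsilon$, which is $C^1$ with gradient of modulus $\le|\nabla f|$, apply the divergence theorem to $g_\varepsilon\,\nabla\rho$ with $g_\varepsilon$ this regularization, and let $\varepsilon\to 0$ using dominated convergence. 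Third, on the boundary use $\nabla\rho\cdot\nu = |\nabla\rho|\ge \inf_{\partial D}|\nabla\rho|$ to move the infimum to the left-hand side, and on the interior bound the two terms crudely by the sup-norms of $\nabla\rho$ and $\Delta\rho$. Collecting constants gives exactly $K = (\sup_D|\nabla\rho| + \sup_D|\Delta\rho|)/\inf_{\partial D}|\nabla\rho|$.

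The main obstacle is a mild regularity bookkeeping issue rather than a conceptual one: justifying the divergence theorem for the vector field $g\,\nabla\rho$ when $g$ is only Lipschitz (because $|f|$ is not $C^1$ at the zeros of $f$) and when the boundary is only $C^2$ — but $C^2$ is already more than enough for Gauss--Green, and the regularization $\sqrt{|f|^2+\varepsilon^2}$ handles the Lipschitz point. One should also be slightly careful that $\nabla\rho$ is taken on a neighborhood of $\overline D$ (it is, since $\rho$ is a defining function and hence $C^2$ near $\overline D$), so that $\Delta\rho$ and $\nabla\rho$ are genuinely defined and bounded on $\overline D$; this is exactly why the suprema in the definition of $K$ are taken over $D$ and are finite. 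No deeper estimate — no Sobolev embedding, no trace inequality beyond the elementary one just derived — is needed.
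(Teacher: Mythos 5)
Your proposal is correct and follows essentially the same route as the paper: reduce to $f\in C^1(\overline D)$ by density and continuity of the trace, apply the divergence theorem to $|f|\,\nabla\rho$ to get $\int_{\partial D}|f|\,|\nabla\rho|\,dS=\int_D\nabla|f|\cdot\nabla\rho\,d\lambda+\int_D|f|\,\Delta\rho\,d\lambda$, then bound crudely using $|\nabla|f||\le|\nabla f|$ and the sup/inf of $\nabla\rho,\Delta\rho$. Your version is in fact a touch more careful than the paper's on two small points: you regularize $|f|$ by $\sqrt{|f|^2+\varepsilon^2}-\varepsilon$ to justify the Gauss--Green step, and you correctly use the inequality $|\nabla|f||\le|\nabla f|$ (which is the right statement for complex-valued $f$; the paper asserts equality, which is generally false unless $f$ is real).
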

    The above lemma is a slight modification of \cite[Theorem 1.5.1.10]{Grisvard}. Considering \cite[Theorem 1.5.1.10]{Grisvard} needs to assume $p\neq 1$, we also contain a quick proof of it
    \begin{proof}
    Since $C^1(\overline D)$ is dense in $W^{1,1}(D)$ and the trace operator is continuous, we may assume $f\in C^1(\overline D)$. Using Integration-by-Parts for functions in Sobolev spaces, we have
    $$\int_{\partial D}|f|F\cdot \frac{\nabla \rho}{|\nabla\rho|}dS=\int_{D}\nabla\cdot(|f|F)d\lambda=\int_{D}\nabla|f|\cdot Fd\lambda+\int_{D}|f|\nabla\cdot Fd\lambda.$$
    Since $|\nabla|f||\leq |\nabla f|$, then the conclusion follows by Lemma \ref{lem:normal vector}.
    \end{proof}
    If $D$ has a $C^2$-boundary defining function $\rho$, then we have  
    $$K:=\frac{\sup_{x\in D}|\nabla\rho(x)|+\sup_{x\in D}|\Delta\rho(x)|}{\inf_{x\in\partial D}|\nabla\rho(x)|}.$$
    \begin{lem}\label{lem:boundary integral}
    Let $D\subset\mr^n(n\geq 2)$ be a bounded domain with a Lipschitz boundary defining function $\rho$, and let $0<a\leq n-1$, then for any $x\in \mr^n$, we have
    $$\int_{\partial D}|x-y|^{-(n-1)+a}dS(y)\leq \frac{Kn\omega_n}{a}\left(\frac{n|D|}{\omega_n}\right)^{\frac{a}{n}}\left(1+\frac{|D|}{\omega_n}\right),$$
    where
    $$K:=\sup_{x\in D}|F(x)|+\sup_{x\in D}|\nabla\cdot F(x)|.$$
    \end{lem}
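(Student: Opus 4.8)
The plan is to convert the boundary integral into interior integrals by applying Lemma~\ref{lem:n integral} to a smooth regularization of $f(y):=|x-y|^{-(n-1)+a}$, and then to estimate those interior integrals by Lemma~\ref{lem:interior integral}. Fix $x\in\mr^n$. Since $0<a\le n-1$, the exponent $-(n-1)+a$ is nonpositive, so I would work with
$$f_\epsilon(y):=\left(|x-y|^2+\epsilon^2\right)^{\frac{-(n-1)+a}{2}},\qquad \epsilon>0,$$
which is smooth on $\mr^n$ (hence in $C^1(\overline D)$), satisfies $f_\epsilon(y)\le|x-y|^{-(n-1)+a}$ for $y\ne x$, increases pointwise to $|x-y|^{-(n-1)+a}$ as $\epsilon\downarrow0$, and obeys $|\nabla f_\epsilon(y)|\le(n-1-a)|x-y|^{-n+a}$ by a one-line computation.

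Next, applying Lemma~\ref{lem:n integral} to $f_\epsilon$ and using the two pointwise bounds above gives
$$\int_{\partial D}f_\epsilon\,dS\le K\left((n-1-a)\int_D|x-y|^{-n+a}\,d\lambda(y)+\int_D|x-y|^{-n+(a+1)}\,d\lambda(y)\right).$$
I would then apply Lemma~\ref{lem:interior integral} to each term, once with exponent $a$ (valid since $0<a\le n-1\le n$) and once with exponent $a+1$ (valid since $0<a+1\le n$). Writing $c:=\left(\frac{n|D|}{\omega_n}\right)^{1/n}$, so that $\frac{|D|}{\omega_n}=\frac{c^n}{n}$, this bounds $\int_{\partial D}f_\epsilon\,dS$ by $\frac{K\omega_n c^a}{a}\left((n-1-a)+\frac{a}{a+1}c\right)$, and it remains only to check the elementary inequality $(n-1-a)+\frac{a}{a+1}c\le n+c^n$, which I would do by treating $c\le1$ and $c\ge1$ separately. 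Letting $\epsilon\downarrow0$ and invoking monotone convergence on $(\partial D,dS)$ then transfers the resulting bound to $\int_{\partial D}|x-y|^{-(n-1)+a}\,dS(y)$, which is exactly the claimed estimate.

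I expect the main obstacle to be not a computation but the reason the regularization is needed in the first place: when $x\in\partial D$ the function $f$ is singular on the boundary itself, so one cannot assume a priori either that $\int_{\partial D}|x-y|^{-(n-1)+a}\,dS$ is finite or that this integral coincides with the Sobolev trace appearing in Lemma~\ref{lem:n integral}. Passing to the smooth $f_\epsilon$, whose trace is literally its restriction, and then using monotone convergence simultaneously establishes finiteness and produces the bound, uniformly in the location of $x$ relative to $\overline D$; the remaining steps (the two uses of Lemma~\ref{lem:interior integral} and the final elementary inequality) are routine.
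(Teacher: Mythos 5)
Your proof is correct and follows the same route as the paper: apply Lemma~\ref{lem:n integral} to (a version of) $|x-y|^{-(n-1)+a}$, then estimate the two resulting interior integrals with Lemma~\ref{lem:interior integral}, and finish with the elementary observation $\left(\frac{n|D|}{\omega_n}\right)^{1/n}\le \frac{n|D|}{\omega_n}+1$. The paper simply fixes $x\in D$ and applies the lemmas directly to $u(y)=|x-y|^{-(n-1)+a}$ (which lies in $W^{1,1}(D)$ despite the interior singularity at $x$), saying nothing about $x\in\partial D$ or $x\notin\overline D$ even though the statement is claimed for all $x\in\mr^n$; your $f_\epsilon$-regularization combined with monotone convergence is a tighter version of the same argument that makes the $W^{1,1}$ step transparent and, unlike the paper's proof as written, actually covers $x\in\partial D$, which is exactly where the trace subtlety you identify is real.
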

    \begin{proof}
    Fix $x\in D$, and for any $y\in\mr^n$, set $u(y):=|x-y|^{-(n-1)+a}$, then we have
    $$\nabla u(y)=(-(n-1)+a)|x-y|^{-n+a}\nabla|x-y|,$$
    so we know
    $$|\nabla u(y)|=(n-1-a)|x-y|^{-n+a}.$$
    By Lemma \ref{lem:interior integral} and Lemma \ref{lem:n integral}, the proof is completed  if we note that
    $$\left(\frac{n|D|}{\omega_n}\right)^{\frac{1}{n}}\leq \frac{n|D|}{\omega_n}+1.$$
    \end{proof}
           Now we give the proof of Theorem \ref{thm:sobolev inequality}. Similar to the above, we always choose $F$ as in 
           Lemma \ref{lem:normal vector}. We assume $n=1,$ the general case is similar (we only need to replace Cauchy's Integral Formula
            by Bochner-Martinelli Formula), and its proof is given in Appendix \ref{appendix}. Let us recall Cauchy's Integral Formula. For any $z\in \Omega,$ we have
            $$
            f(z)=\frac{1}{2\pi i}\left(\int_{\Omega}\frac{\frac{\partial f}{\partial\bar{\xi}}(\xi)}{\xi-z}
            d\xi\wedge d\bar{\xi}+\int_{\partial\Omega}\frac{f(\xi)}{\xi-z}d\xi\right)
            $$
            for all $f\in C^1(\Omega)\cap C^0(\overline{\Omega})$ such that $\frac{\partial f}{\partial\bar{z}}$ has a continuous continuation to $\overline\Omega$
            (for those who are not familiar with such a general version, please see \cite[Theorem 1.9.1]{Henkin} or \cite[Theorem 3.1]{Ada} for a proof).
            It is clear that
            $$|d\xi\wedge d\bar\xi|=2d\lambda(\xi),\ |d\xi|=dS(\xi),$$
            so it suffices to prove
         \begin{lem}\label{lem:basic lemma}
           Let $\Omega\subset \mc$ be a bounded domain with Lipschitz boundary, and let $p,q,r$ satisfy
        $$
         1\leq p<\infty,\ 1\leq q<\infty,\ 1\leq r<\infty,\ q(2-p)<2p,\ q<2r,
        $$
        then
         \begin{itemize}
           \item[(i)] There exists a constant $\delta:=\delta(p,q)>0$ such that
          $$
           \delta\|B_\Omega f\|_{L^q(\Omega)}\leq |\Omega|^{\frac{1}{2}+\frac{1}{q}-\frac{1}{p}}\|f\|_{L^p(\Omega)},\ \forall f\in L^p( \Omega),
           $$
          where
          $$
          B_\Omega f(z):=\int_{\Omega}\frac{f(\xi)}{\xi-z}d\lambda(\xi),\ \forall z\in \Omega.
          $$
           \item[(ii)] There exists a constant $\delta:=\delta(|\Omega|,|\partial\Omega|,K,q,r)>0$ such that
            $$
            \delta\|B_{\partial\Omega} f\|_{L^q(\Omega)}\leq \|f\|_{L^r(\partial\Omega)},\ \forall f\in L^r(\partial\Omega),
            $$
          where
          $$
          B_{\partial\Omega}f(z):=\int_{\partial\Omega}\frac{f(\xi)}{\xi-z}dS(\xi),\ \forall z\in \Omega,$$
          and 
         $$K:=\sup_{z\in \Omega}|F(z)|+\sup_{z\in \Omega}|\nabla\cdot F(z)|.$$
         \end{itemize}
         \end{lem}
        \begin{proof}
             For any $a>0,$ set
             $$
             M(a,\Omega):=\sup_{z\in \overline{\Omega}}\int_{\Omega}|\xi-z|^{-2+a}d\lambda(\xi),
             $$
             $$
             N(a,\partial\Omega):=\sup_{z\in \Omega}\int_{\partial\Omega}|\xi-z|^{-1+a}dS(\xi),
             $$
        then $M(a,\Omega),\ N(a,\partial\Omega)<\infty$ for any $a>0$ by Lemma \ref{lem:interior integral} and Lemma \ref{lem:boundary integral}.\\
        (i) Since $\Omega$ is bounded, then by H\"older's Inequality, we may assume $q\geq p$. We moreover assume $p>1$
        since the case $p=1$ can be treated by taking a limit. Set
            \begin{align*}
            b:&=\frac{1}{2}\left(\max\left\{-\frac{2}{q},-1\right\}+\min\left\{\frac{p-2}{p},0\right\}\right),\\
            &=\left\{\begin{array}{ll}
            -\frac{1}{p} &\text{ if }q\leq 2,\\
            \frac{1}{2}-\frac{1}{p}-\frac{1}{q}&\text{ if }q\geq 2,\ p\leq 2,\\
            -\frac{1}{q}& \text{ if }p\geq 2,
            \end{array}
            \right.
            \end{align*}
            then by assumption, we have
            $$
            0<2+qb\leq 2,\ 0<p-2-pb\leq 2(p-1).$$
           Choose $a,q_0,r_0$ such that
            $$
            qa=p,\ q_0(1-a)=p,\ \frac{1}{q}+\frac{1}{q_0}+\frac{1}{r_0}=1,
            $$
            where if $p=q$, then $q_0=\infty$.
             Fix any $f\in L^p(\Omega)$, then for any $ z\in \Omega,$ we have (use H\"older's Inequality for three functions)
            \begin{align*}
            &\quad |B_\Omega f(z)| \\
            &\leq \int_{\Omega}|f(\xi)||\xi-z|^{-1}d\lambda(\xi)  \\
            &=\int_{\Omega}|f(\xi)|^{a}|\xi-z|^{b}
            |f(\xi)|^{1-a}|\xi-z|^{-1-b}d\lambda(\xi)  \\
            &\leq \left[\int_{\Omega}|f(\xi)|^{p}|\xi-z|^{qb}d\lambda(\xi)\right]^{\frac{1}{q}}
            \left[\int_{\Omega}|f(\eta)|^pd\lambda(\eta)\right]^{\frac{q-p}{qp}} \\
            &\quad\cdot \left[\int_{\Omega}|w-z|^{(-1-b)\frac{p}{p-1}}d\lambda(w)\right]^{\frac{p-1}{p}},
            \end{align*}
             which implies
            \begin{align*}
            &\quad |B_\Omega f(z)|^q \\
            &\leq \|f\|_{L^p(\Omega)}^{q-p}\int_{\Omega}|f(\xi)|^{p}|\xi-z|^{qb}d\lambda(\xi)\left[\int_{\Omega}|\eta-z|^{(-1-b)\frac{p}{p-1}}d\lambda(\eta)
            \right]^{\frac{q(p-1)}{p}}  \\
            &\leq M\left(\frac{p-2-pb}{p-1},\Omega\right)^{\frac{q(p-1)}{p}}\|f\|_{L^p(\Omega)}^{q-p}\int_{\Omega}|f(\xi)|^{p}|\xi-z|^{qb}d\lambda(\xi) ,
            \end{align*}
            so (by Fubini's Theorem)
            \begin{align*}
            &\quad \|B_\Omega f\|_{L^q(\Omega)}^q \\
            &\leq M(2+qb,\Omega)M\left(\frac{p-2-pb}{p-1},\Omega\right)^{\frac{q(p-1)}{p}}\|f\|_{L^p(\Omega)}^{q-p}\|f\|_{L^p(\Omega)}^p  \\
            &=M(2+qb,\Omega)M\left(\frac{p-2-pb}{p-1},\Omega\right)^{\frac{q(p-1)}{p}}\|f\|_{L^p(\Omega)}^{q} .
            \end{align*}
            By Lemma \ref{lem:interior integral} and a simple computation, we know
            \begin{align*}
            &\quad M(2+qb,\Omega)M\left(\frac{p-2-pb}{p-1},\Omega\right)^{\frac{q(p-1)}{p}}\\
            &\leq \frac{\omega_{2}}{2+qb}\left(\frac{(p-1)\omega_2}{p-2-pb}\right)^{\frac{q(p-1)}{p}}\left(\frac{2n|\Omega|}{\omega_{2}}\right)^{1+\frac{q}{2}-\frac{q}{p}}\\
            &\leq \frac{\omega_{2}}{2+qb}\left(\frac{(p-1)\omega_2}{p-2-pb}+1\right)^{q}\left(\frac{2n|\Omega|}{\omega_{2}}\right)^{1+\frac{q}{2}-\frac{q}{p}}.
            \end{align*}
        (ii) Similar to (i), we may assume $q\geq r>1$.
        \begin{align*}
        b:&=\frac{1}{2}\left(\max\left\{-\frac{2}{q},-1\right\}-\frac{1}{r}\right),\\
        &=\left\{\begin{array}{ll}
        -\frac{1}{2}-\frac{1}{2r} &\text{ if }q\leq 2,\\
        -\frac{1}{q}-\frac{1}{2r}&\text{ if }q\geq 2,\\
        \end{array}
        \right.
        \end{align*}
        then we have
        $$0<2+qb\leq 2,\ 0<-1-rb\leq r-1.$$
            Choose $a,q_0,r_0$ such that
            $$
            qa=r,\ q_0(1-a)=r,\ \frac{1}{q}+\frac{1}{q_0}+\frac{1}{r_0}=1,
            $$
            where if $q=r$, then $q_0=\infty$. Fix $f\in L^r(\partial\Omega)$, then for any $ z\in \Omega,$ we have
            \begin{align*}
             &\quad |B_{\partial\Omega}f(z)| \\
             &\leq \int_{\partial\Omega}|f(\xi)| |\xi-z|^{-1}dS(\xi)  \\
             &=\int_{\partial\Omega}|f(\xi)|^a|\xi-z|^b |f(\xi)|^{1-a}|\xi-z|^{-1-b}dS(\xi)  \\
             &\leq \left[\int_{\partial\Omega}|f(\xi)|^{qa}|\xi-z|^{qb}dS(\xi)\right]^{\frac{1}{q}}
              \left[\int_{\partial\Omega}|f(\eta)|^{q_0(1-a)}dS(\eta)\right]^{\frac{1}{q_0}}  \\
             &\quad \cdot\left[\int_{\partial\Omega}|w-z|^{(-1-b)\frac{r}{r-1}}dS(w)\right]^{\frac{r-1}{r}}  \\
             &\leq N\left(\frac{1+rb}{1-r},\partial\Omega\right)^{\frac{r-1}{r}} \left[\int_{\partial\Omega}|f(\xi)|^{r}|\xi-z|^{qb}dS(\xi)\right]^{\frac{1}{q}}  \\
             &\quad\cdot \left[\int_{\partial\Omega}|f(\eta)|^{r}dS(\eta)\right]^{\frac{q-r}{qr}}  ,
            \end{align*}
            so we get
            \begin{align*}
            &\quad |B_{\partial\Omega}f(z)|^{q} \\
            &\leq N\left(\frac{1+rb}{1-r},\partial\Omega\right)^{\frac{q(r-1)}{r}}
            \int_{\partial\Omega}|f(\xi)|^{r}|\xi-z|^{qb}dS(\xi)  \\
            &\quad\cdot  \left[\int_{\partial\Omega}|f(\eta)|^{r}dS(\eta)\right]^{\frac{q-r}{r}},
            \end{align*}
             then
            \begin{align*}
            &\quad \int_{\Omega}|B_{\partial\Omega} f(z)|^{q}d\lambda(z) \\
            &\leq N\left(\frac{1+rb}{1-r},\partial\Omega\right)^{\frac{q(r-1)}{r}} \int_{(z,\xi)\in \Omega\times\partial\Omega}
            |\xi-z|^{qb}d\lambda(z)dS(\xi)  \\
            &\quad\cdot\int_{\partial\Omega}|f(w)|^{r}dS(w)\cdot\left[\int_{\partial\Omega}|f(\eta)|^{r}dS(\eta)\right]^{\frac{q-r}{r}}  \\
            &\leq M(2+qb,\Omega)|\partial\Omega| N\left(\frac{1+rb}{1-r},\partial\Omega\right)^{\frac{q(r-1)}{r}} \|f\|_{L^r(\partial\Omega)}^q.
            \end{align*}
            By Lemma \ref{lem:interior integral}, and Lemma \ref{lem:boundary integral}, we have
                \begin{align*}
            &\quad \frac{2+qb}{\omega_{2}}M(2+qb,\Omega) N\left(\frac{-1-rb}{r-1},\partial\Omega\right)^{\frac{q(r-1)}{r}}\\
            &\leq \left(1+\frac{|\Omega|}{\omega_{2}}\right)^{\frac{q(r-1)}{r}}\left(\frac{r-1}{-1-rb}\right)^{\frac{q(r-1)}{r}}
            \left(2K\omega_{2}\right)^{\frac{q(r-1)}{r}}\left(\frac{2|\Omega|}{\omega_{2}}\right)^{1-\frac{q}{2r}}\\
            &\leq \left(1+\frac{|\Omega|}{\omega_{2}}\right)^q\left(\frac{1-r}{1+rb}+1\right)^{q}
            \left(2K\omega_{2}+1\right)^{q}\left(\frac{2|\Omega|}{\omega_{2}}\right)^{1-\frac{q}{2r}}\\
            &=\left(1+\frac{|\Omega|}{2\pi}\right)^{q}\left(\frac{1-r}{1+rb}+1\right)^{q}
            \left(4K\pi+1\right)^{q}\left(\frac{|\Omega|}{\pi}\right)^{1-\frac{q}{2r}} .
            \end{align*}
            The proof is completed.
       \end{proof}
    \section{The proof of Corollary \ref{thm: general sobolev inequality}}\label{sec:general sobolev inequality}
    In this section, we give the proof of Corollary \ref{thm: general sobolev inequality}. For convenience, we restate it here.
        \begin{cor}[= Corollary \ref{thm: general sobolev inequality}]
        Let $D\subset\mr^n$ be a bounded domain with Lipschitz boundary and let $p,q,r$ satisfy
       $$
         1\leq p<\infty,\ 1\leq q<\infty,\ 1\leq r< \infty,\ q(2n-p)<2np,\ q(2n-1)<2nr,
       $$
        then there is a constant $\delta:=\delta(D,p,q,r)>0$ such that
        $$
        \delta\|f\|_{L^q(D)}\leq \|\nabla f\|_{L^p(D)}+\|f\|_{L^r(\partial D)},\ \forall f\in C^1(\overline D).
        $$
      \end{cor}
    \begin{proof}
    Let $\Omega_z:=D_x+i\mr_y^n\subset \mc_z^n$. Consider the map
    $$\phi\colon \mc_z^n\rw (\mc_w^*)^n,\ (z_1,\cdots,z_n)\rwo (e^{z_1},\cdots,e^{z_n}),$$
    and let $\Omega':=\phi(\Omega)$, then $\Omega'$ is a bounded domain of $(\mc_w^*)^n$ with Lipschitz boundary. For any Lebesgue measurable subset $U\subset (\mc_w^*)^n$, define
    $$\mu(U):=\left|\{z\in \mc^n|\ \phi(z)\in U,\ \im(z_1),\im(z_2),\cdots,\im(z_n)\in [0,2\pi)\}\right|,$$
    then $\mu$ is a measure on $(\mc_w^*)^n$ (such measure is usually called the push-forward measure). Similarly, we may define the push-forward measure $d\sigma$ on $\partial\Omega'$ of $dS$ on $\partial \Omega$.
    Let $d\lambda'(w)$ is the Lebesgue measure of $(\mc_w^*)^n$ and let $dS'(w)$ be the Hausdorff measure of $\partial\Omega'$, then 
    there is a constant $\delta:=\delta(\Omega')>0$ such that 
    $$\delta d\mu\leq d\lambda'\leq \frac{1}{\delta}d\mu,\ dS'\leq \frac{1}{\delta}d\sigma,\ \inf_{w\in\overline{\Omega'}}\inf_{1\leq j\leq n}|w_j|
    >\frac{\delta}{2}.$$
    We extend $f$ to a $C^1$-function on $\mr_x^n$, and we may regard $f$ as a function on $\mc_z^n$ which is independent of the imaginary part of $z$ in $\mc^n_z$.
    It is clear that $f$ induces a $C^1$-function $f'$ on $(\mc_w^*)^n$ via $\phi$, i.e.
    $$f(e^{z_1},\cdots,e^{z_n})=f(\re(z_1),\cdots,\re(z_n)),\ \forall\ (z_1,\cdots,z_n)\in \mc_z^n.$$
    \indent By Theorem \ref{thm:sobolev inequality}, we know there is a constant $\delta':=\delta'(\Omega',p,q,r)>0$ such that
   $$
    \delta'\|f'\|_{L^q(\Omega')}\leq \|\bar\partial f'\|_{L^p(\Omega')}+\|f'\|_{L^r(\partial\Omega')}.
    $$
    It is obvious that
    $$
    \int_{\Omega'}|f'|^qd\lambda'\geq \delta\int_{\Omega'}|f'(w)|^qd\mu(w)=(2\pi)^n\delta\int_{D}|f|^qd\lambda,
    $$
    and
    $$
    \int_{\partial\Omega'}|f'|^rdS'\leq \frac{1}{\delta}\int_{\partial\Omega'}|f'|^rd\sigma\leq\frac{(2\pi)^n}{\delta}\int_{\partial D}|f|^rd\lambda.
    $$
    For any $w\in (\mc_w^*)^n$, we know
    $$\frac{\partial f'}{\partial\bar{w}_j}(w)=\frac{1}{2\bar{w}_j}\frac{\partial f}{\partial x_j}(\ln|w_1|,\cdots,\ln|w_n|),\ \forall 1\leq j\leq n,$$
    so  we get
    $$|\bar\partial f'|\leq \frac{1}{\delta}|\nabla f|.$$
    Similar to the above, we have
    $$
    \int_{\Omega'}|\bar\partial f'|^pd\lambda'\leq \frac{(2\pi)^n}{\delta^{p+1}}\int_{D}|\nabla f|^pd\lambda.
    $$
    Combing the above inequalities, we get what we wanted.
    \end{proof}
    \section{The proof of Theorem \ref{thm:strictly pseudoconvex}}
    \indent To prove Theorem \ref{thm:strictly pseudoconvex}, let us  recall some lemmas.
    For the notations, see Section \ref{sec:notations}. \\
   \indent Assumption of $\Omega,\ \varphi,\ \psi,\ \rho,\ c$ as in Theorem \ref{thm:strictly pseudoconvex}.  \\
    \indent The following Lemma (see {\cite[Formula 2.1.9]{Hor65}} or {\cite[Lemma 2.31]{Ada}}) describes when an element  $\alpha\in C_{(0,1)}^1(\overline{\Omega})$ belongs to $\dom(T_\varphi^*)$.
    \begin{lem}\label{dom}
    Let $\alpha:=\sum_{j=1}^n \alpha_jd\bar{z}_j\in C_{(0,1)}^1(\overline{\Omega})$, then we know $\alpha\in\dom(T_\varphi^*)$ if and only if
   $$
   \sum_{j=1}^n \alpha_j(z)\frac{\partial\rho}{\partial z_j}(z)=0
   $$
    for any $z\in\partial\Omega$.
    \end{lem}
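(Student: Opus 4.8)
The plan is to reduce the statement to the divergence theorem together with the standard fact that $C^1(\overline\Omega)$ is a core for the (maximal) operator $T_\varphi$; this just makes explicit, in the present notation, the classical computation behind \cite[Formula 2.1.9]{Hor65} and \cite[Lemma 2.31]{Ada}.

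First I would record a weighted Green identity. Near $\partial\Omega$ one has $\Omega=\{\rho<0\}$ with $\nabla\rho\neq 0$, so the outward unit normal is $\nabla\rho/|\nabla\rho|$, and applying the divergence theorem on $\mr^{2n}$ to the real and imaginary parts of $\partial/\partial\bar z_j$ gives $\int_\Omega(\partial h/\partial\bar z_j)\,d\lambda=\int_{\partial\Omega}h\,\overline{(\partial\rho/\partial z_j)}\,|\nabla\rho|^{-1}\,dS$ for every $h\in C^1(\overline\Omega)$ (using that $\rho$ is real). Taking $h=u\,\overline{\alpha_j}\,e^{-\varphi}$ and summing over $j$, I obtain, for all $u\in C^1(\overline\Omega)$,
\[
\langle T_\varphi u,\alpha\rangle_2=\langle u,\vartheta_\varphi\alpha\rangle_1+\int_{\partial\Omega}u\,e^{-\varphi}\,\overline{\beta}\,\frac{dS}{|\nabla\rho|},\qquad \beta:=\sum_{j=1}^n\alpha_j\,\frac{\partial\rho}{\partial z_j},
\]
where $\vartheta_\varphi\alpha:=-\sum_{j}e^{\varphi}\,\partial_{z_j}\!\big(e^{-\varphi}\alpha_j\big)=-\sum_j\big(\partial\alpha_j/\partial z_j-(\partial\varphi/\partial z_j)\alpha_j\big)$ is the formal adjoint of $\bar\partial$. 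Since $\alpha\in C^1_{(0,1)}(\overline\Omega)$ and $\overline\Omega$ is compact, $\vartheta_\varphi\alpha\in C^0(\overline\Omega)\subset H_1$, and $\beta\in C^1(\overline\Omega)$ because $\rho\in C^2$.

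For the ``if'' direction, assume $\beta\equiv 0$ on $\partial\Omega$: then the boundary term vanishes, so $\langle T_\varphi u,\alpha\rangle_2=\langle u,\vartheta_\varphi\alpha\rangle_1$ for all $u\in C^1(\overline\Omega)$; approximating an arbitrary $u\in\dom(T_\varphi)$ in the graph norm $(\|u\|_1^2+\|\bar\partial u\|_2^2)^{1/2}$ by functions in $C^1(\overline\Omega)$ and passing to the limit, the identity persists for all $u\in\dom(T_\varphi)$, whence $\alpha\in\dom(T_\varphi^*)$ with $T_\varphi^*\alpha=\vartheta_\varphi\alpha$. For the ``only if'' direction, let $\alpha\in\dom(T_\varphi^*)$ and put $g:=T_\varphi^*\alpha\in H_1$; testing $\langle T_\varphi u,\alpha\rangle_2=\langle u,g\rangle_1$ against $u\in C_c^\infty(\Omega)$ and comparing with the Green identity forces $g=\vartheta_\varphi\alpha$ a.e., and feeding this back in leaves $\int_{\partial\Omega}u\,e^{-\varphi}\,\overline{\beta}\,|\nabla\rho|^{-1}\,dS=0$ for every $u\in C^1(\overline\Omega)$. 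Choosing $u=\beta$ makes the integrand $|\beta|^2\,e^{-\varphi}\,|\nabla\rho|^{-1}\ge 0$ on $\partial\Omega$, so $\beta\equiv 0$ on $\partial\Omega$, i.e., $\sum_j\alpha_j\,\partial\rho/\partial z_j=0$ on $\partial\Omega$, which is the asserted condition.

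I expect the only non-routine ingredient to be the core property of $C^1(\overline\Omega)$ used in the ``if'' direction; for that density I would appeal to the references already attached to the lemma, or observe that it follows by mollification after translating $\Omega$ slightly inward. Everything else is the divergence theorem and a single choice of test function.
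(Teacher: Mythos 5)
Your proof is correct and is precisely the classical computation behind the references the paper cites for this lemma (the paper itself offers no proof): the weighted Green identity obtained from the divergence theorem, the identification of $T_\varphi^*\alpha$ with the formal adjoint $\vartheta_\varphi\alpha$ by testing against $u\in C_c^\infty(\Omega)$, and then the choice $u=\beta$ to extract the pointwise boundary condition $\sum_j\alpha_j\,\partial\rho/\partial z_j=0$ on $\partial\Omega$. The one non-routine ingredient---density of $C^1(\overline\Omega)$ in $\dom(T_\varphi)$ with respect to the graph norm---is rightly flagged; it is a standard Friedrichs-type mollification argument (local inward translation plus convolution via a partition of unity), valid for $C^1$, indeed Lipschitz, boundaries, and is the same technique underlying the paper's Lemma~\ref{approximation}.
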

    \indent By the above Lemma, it is obvious that $\dom(T_\varphi^*)$ does not depend on the weight function $\varphi.$ \\
    \indent A basic approximation result proved by H\"ormander is the following
    \begin{lem}\label{approximation}\cite[Proposition 2.1.1]{Hor65}
     $C_{(0,1)}^1(\overline{\Omega})\cap \dom(T_\varphi^*)$ is dense in $\dom(T_\varphi^*)\cap \dom(S_\varphi)$ under the graph norm.
    \end{lem}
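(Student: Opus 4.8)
My plan is to prove this density statement by the classical localization-and-regularization scheme, combining a partition of unity with Friedrichs-type mollifiers adapted to the boundary. Throughout, let $\vartheta_\varphi$ denote the first-order differential operator that coincides with $T_\varphi^*$ on $C^1_{(0,1)}$-forms lying in its domain; since $\varphi\in C^2$, its coefficients are $C^1$.

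First I would fix a $C^2$ defining function $\rho$ of $\Omega$ and a finite open cover $U_0\Subset\Omega$, $U_1,\dots,U_m$ of $\overline\Omega$, where each $U_i$ ($i\geq 1$) is chosen so small that $\partial\Omega\cap U_i$ is, in suitable real coordinates, a graph over a piece of a hyperplane; pick $\chi_i\in C^\infty_c(U_i)$ with $\sum_{i=0}^m\chi_i\equiv 1$ near $\overline\Omega$. Given $\alpha\in\dom(T_\varphi^*)\cap\dom(S_\varphi)$, one checks that multiplication by each $\chi_i$ preserves both domains: for $\dom(T_\varphi^*)$ this is immediate from Lemma \ref{dom}, since the pointwise condition $\sum_j\alpha_j\,\partial\rho/\partial z_j=0$ on $\partial\Omega$ is stable under multiplication by a function, while $T_\varphi^*(\chi_i\alpha)$ differs from $\chi_i T_\varphi^*\alpha$ by a zeroth-order term in $\bar\partial\chi_i$ and so stays in $H_1$; for $\dom(S_\varphi)$ it is the Leibniz rule. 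Hence it suffices to approximate each $\chi_i\alpha$ in graph norm. For the interior piece $\chi_0\alpha$, which has compact support in $\Omega$, ordinary convolution $\chi_0\alpha*\psi_\epsilon$ with a mollifier works: the regularization is smooth with support in $\Omega$ for $\epsilon$ small (hence trivially in $\dom(T_\varphi^*)$), $\bar\partial$ commutes with convolution, and $\vartheta_\varphi(\chi_0\alpha*\psi_\epsilon)-(\vartheta_\varphi\chi_0\alpha)*\psi_\epsilon\to 0$ in $L^2$ by Friedrichs' lemma.

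For a boundary piece $f:=\chi_i\alpha$ ($i\geq 1$) I would work in the chart $U=U_i$ where, after flattening $\partial\Omega\cap U$, the domain is $\{x_{2n}<0\}$, $\bar\partial$ becomes a first-order operator with $C^1$ coefficients, and the condition of Lemma \ref{dom} becomes the vanishing on $\{x_{2n}=0\}$ of the single ``normal'' component $f_\nu$ of $f$, the tangential components being unconstrained. I would regularize the tangential part by translating it a distance $s>0$ into $\Omega$ (in the $-x_{2n}$ direction), which makes it defined on a neighborhood of $\overline\Omega\cap U$, and then convolving with a mollifier of radius $\delta\ll s$; translation commutes with the constant-coefficient principal part of $\bar\partial$ and of $\vartheta_\varphi$, and Friedrichs' lemma controls the remaining commutators, so this converges to the tangential part of $f$ in graph norm as $\delta,s\to 0$. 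The normal component $f_\nu$ I would regularize differently: extend it by zero across $\{x_{2n}=0\}$ — the vanishing of its trace being exactly what kills the singular contributions to the relevant derivatives — then translate slightly outward so the zero-region covers a full one-sided neighborhood of $\partial\Omega$, and convolve; the result is smooth and vanishes near $\partial\Omega$, hence satisfies the boundary condition automatically. Reassembling, summing over $i$, and passing to a diagonal sequence in the parameters $(\epsilon,s,\delta)$ produces forms $\alpha_\nu\in C^1_{(0,1)}(\overline\Omega)\cap\dom(T_\varphi^*)$ with $\alpha_\nu\to\alpha$, $\bar\partial\alpha_\nu\to\bar\partial\alpha$, $T_\varphi^*\alpha_\nu\to T_\varphi^*\alpha$ in $L^2$, which is the assertion.

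The heart of the matter, and the step I expect to require the most care, is reconciling the two competing requirements near the boundary: the regularized forms must stay in $\dom(T_\varphi^*)$, i.e.\ keep satisfying the boundary condition of Lemma \ref{dom}, and simultaneously $\bar\partial$, $\vartheta_\varphi$, and the $L^2$ norm of the error must all go to zero. This is exactly what forces the asymmetric construction above — inward translation plus mollification for the unconstrained tangential part, zero-extension plus outward translation plus mollification for the constrained normal part — and it requires a careful accounting of the Friedrichs commutator terms and of the jump terms produced by the zero-extension, which must cancel precisely because the normal component has vanishing boundary trace. Everything else in the argument is routine bookkeeping.
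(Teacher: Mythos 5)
The paper does not prove Lemma \ref{approximation}; it states the result and attributes it to H\"ormander, with \cite[Proposition 2.1.1]{Hor65} cited in place of a proof. So there is no in-paper argument for you to recover, and I can only assess your proposal on its own merits against the standard references.

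Your overall scheme --- partition of unity, interior Friedrichs mollification, local flattening of the boundary, and an adapted regularization near $\partial\Omega$ that respects the constraint of Lemma \ref{dom} --- is precisely the classical one, and you correctly identify that the constraint is carried only by the complex-normal component. However, your specific device for the boundary pieces, namely splitting $\chi_i\alpha$ into tangential and normal parts $f^t,f^n$ and then translating them in \emph{opposite} normal directions before mollifying, has a real gap. The graph norm controls $\bar\partial(\chi_i\alpha)$ and $\vartheta_\varphi(\chi_i\alpha)$, but not $\bar\partial f^t$, $\vartheta_\varphi f^t$, $\bar\partial f^n$, $\vartheta_\varphi f^n$ individually: $(\bar\partial,\vartheta_\varphi)$ is not an elliptic system on $(0,1)$-forms, so membership in $\dom(T_\varphi^*)\cap\dom(S_\varphi)$ is far from $W^{1,2}$ regularity. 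Concretely, after flattening so that $\rho=x_{2n}$ and the constraint is $\alpha_n|_{\partial\Omega}=0$, the $d\bar z_j\wedge d\bar z_n$ component of $\bar\partial\alpha$ is $\partial\alpha_n/\partial\bar z_j-\partial\alpha_j/\partial\bar z_n$, and only this \emph{difference} is known to be in $L^2$; neither term is. If you now regularize $\alpha_n$ by zero-extension plus outward translation and $\alpha_j$ ($j<n$) by inward translation, that component of $\bar\partial\alpha_\epsilon$ is $\Phi^{out}_\epsilon(\partial\alpha_n/\partial\bar z_j)-\Phi^{in}_\epsilon(\partial\alpha_j/\partial\bar z_n)$ with two different regularizers, so Friedrichs' commutator lemma does not apply to the difference and you cannot conclude convergence. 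The classical proofs avoid exactly this pitfall by applying one and the same regularization to every component in a given boundary patch (so the controlled $L^2$ combinations survive, modulo a single Friedrichs commutator) and restoring the vanishing normal trace by a separate correction, rather than by an asymmetric translation scheme; this is the piece you would have to repair. Your remaining ingredients --- stability of $\dom(T_\varphi^*)\cap\dom(S_\varphi)$ under multiplication by $C^1(\overline\Omega)$ cutoffs (which should be argued from the duality definition of $T_\varphi^*$, since Lemma \ref{dom} as stated applies only to $C^1$ forms), interior mollification, and the weak interpretation of the vanishing normal trace via integration by parts --- are correct.
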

    \indent The Kohn-Morrey-H\"ormander Formula (see {\cite[Proposition 2.1.2]{Hor65}} or {\cite[Theorem 1.4.2]{Ber95}}) says that
    \begin{lem}\label{basic identity}
    For any $\alpha:=\sum_{j=1}^n \alpha_jd\bar{z}_j\in C_{(0,1)}^1(\overline{\Omega})\cap \dom(T_\varphi^*)$, we have
    \begin{align*}
    \|T_\varphi^*\alpha\|_{1}^2+\|S_\varphi \alpha\|_3^2&=\sum_{j,k=1}^n\int_{\Omega}\alpha_j\bar{\alpha}_k\frac{\partial^2\varphi}{\partial z_j\partial\bar{z}_k}e^{-\varphi}d\lambda
    +\sum_{j,k=1}^n\int_{\Omega}\left|\frac{\partial \alpha_j}{\partial\bar{z}_k}\right|^2e^{-\varphi}d\lambda \\
    &\quad +\sum_{j,k=1}^n\int_{\partial\Omega}\alpha_j\bar{\alpha}_k\frac{\partial^2\rho}{\partial z_j\partial\bar{z}_k}e^{-\varphi}\frac{dS}{|\nabla\rho|}.
    \end{align*}
    \end{lem}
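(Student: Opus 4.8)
The plan is to reduce the identity to two integrations by parts applied to the explicit differential expression for $T_\varphi^*$. First, for $\alpha=\sum_{j=1}^n\alpha_j\,d\bar z_j\in C^1_{(0,1)}(\overline\Omega)\cap\dom(T_\varphi^*)$, pairing $\alpha$ with $\bar\partial u$ for $u$ in a dense subset of $\dom(T_\varphi)$ and integrating by parts gives
$$
\langle T_\varphi u,\alpha\rangle_2=-\sum_{j=1}^n\int_\Omega u\,\overline{\delta_j\alpha_j}\,e^{-\varphi}\,d\lambda+\sum_{j=1}^n\int_{\partial\Omega}u\,\bar\alpha_j\,\frac{1}{|\nabla\rho|}\frac{\partial\rho}{\partial\bar z_j}\,e^{-\varphi}\,dS,
$$
where $\delta_jf:=e^{\varphi}\frac{\partial}{\partial z_j}\!\left(e^{-\varphi}f\right)=\frac{\partial f}{\partial z_j}-\frac{\partial\varphi}{\partial z_j}f$ is the formal adjoint of $-\partial/\partial\bar z_j$ with respect to $e^{-\varphi}d\lambda$. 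By Lemma \ref{dom} we have $\sum_j\bar\alpha_j\,\partial\rho/\partial\bar z_j=\overline{\sum_j\alpha_j\,\partial\rho/\partial z_j}=0$ on $\partial\Omega$, so the boundary integral drops out and $T_\varphi^*\alpha=-\sum_{j=1}^n\delta_j\alpha_j$. Writing $S_\varphi\alpha=\bar\partial\alpha=\sum_{j<k}\bigl(\frac{\partial\alpha_k}{\partial\bar z_j}-\frac{\partial\alpha_j}{\partial\bar z_k}\bigr)d\bar z_j\wedge d\bar z_k$ and expanding the square, then symmetrizing in $j,k$, one obtains
$$
\|S_\varphi\alpha\|_3^2=\sum_{j,k=1}^n\int_\Omega\Bigl|\frac{\partial\alpha_j}{\partial\bar z_k}\Bigr|^2e^{-\varphi}d\lambda-\sum_{j,k=1}^n\int_\Omega\frac{\partial\alpha_j}{\partial\bar z_k}\,\overline{\frac{\partial\alpha_k}{\partial\bar z_j}}\,e^{-\varphi}d\lambda.
$$

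Next I would compute $\|T_\varphi^*\alpha\|_1^2=\sum_{j,k}\int_\Omega(\delta_j\alpha_j)\overline{(\delta_k\alpha_k)}e^{-\varphi}d\lambda$ by two integrations by parts. Using $(\delta_j\alpha_j)e^{-\varphi}=\frac{\partial}{\partial z_j}(e^{-\varphi}\alpha_j)$, moving $\delta_j$ off $\alpha_j$ produces $-\sum_{j,k}\int_\Omega\alpha_j\,\overline{\frac{\partial}{\partial\bar z_j}(\delta_k\alpha_k)}\,e^{-\varphi}d\lambda$ plus a boundary integral $\sum_{j,k}\int_{\partial\Omega}\alpha_j\,\overline{\delta_k\alpha_k}\,\frac{1}{|\nabla\rho|}\frac{\partial\rho}{\partial z_j}e^{-\varphi}dS$, which vanishes again by $\sum_j\alpha_j\,\partial\rho/\partial z_j=0$ on $\partial\Omega$. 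I would then invoke the commutator identity $\frac{\partial}{\partial\bar z_j}(\delta_k\alpha_k)=\delta_k\!\bigl(\frac{\partial\alpha_k}{\partial\bar z_j}\bigr)-\frac{\partial^2\varphi}{\partial z_k\partial\bar z_j}\alpha_k$; taking conjugates and using that $\varphi$ is real (so $\overline{\varphi_{k\bar j}}=\varphi_{j\bar k}$) feeds back the term $\sum_{j,k}\int_\Omega\frac{\partial^2\varphi}{\partial z_j\partial\bar z_k}\alpha_j\bar\alpha_k e^{-\varphi}d\lambda$ and leaves $-\sum_{j,k}\int_\Omega\alpha_j\,\overline{\delta_k(\partial\alpha_k/\partial\bar z_j)}\,e^{-\varphi}d\lambda$. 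A second integration by parts on this last term, now moving $\partial/\partial\bar z_k$ off (since $\overline{\delta_k g}\,e^{-\varphi}=\frac{\partial}{\partial\bar z_k}(e^{-\varphi}\bar g)$), yields the interior piece $\sum_{j,k}\int_\Omega\frac{\partial\alpha_j}{\partial\bar z_k}\,\overline{\frac{\partial\alpha_k}{\partial\bar z_j}}\,e^{-\varphi}d\lambda$ together with a boundary integral $-\sum_j\int_{\partial\Omega}\alpha_j\bigl(\sum_k\frac{\partial\bar\alpha_k}{\partial z_j}\frac{\partial\rho}{\partial\bar z_k}\bigr)\frac{1}{|\nabla\rho|}e^{-\varphi}dS$.

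The crux is to identify this surviving boundary integral with the Levi form term. The function $h:=\sum_k\bar\alpha_k\,\partial\rho/\partial\bar z_k$ is the conjugate of $\sum_k\alpha_k\,\partial\rho/\partial z_k$, hence vanishes on $\partial\Omega$; writing $h=g\rho$ in a neighborhood of $\partial\Omega$ gives $\partial h/\partial z_j=g\,\partial\rho/\partial z_j$ on $\partial\Omega$ with $g$ the same for every $j$, so that $\sum_k\frac{\partial\bar\alpha_k}{\partial z_j}\frac{\partial\rho}{\partial\bar z_k}=\frac{\partial h}{\partial z_j}-\sum_k\bar\alpha_k\frac{\partial^2\rho}{\partial z_j\partial\bar z_k}=g\,\frac{\partial\rho}{\partial z_j}-\sum_k\bar\alpha_k\frac{\partial^2\rho}{\partial z_j\partial\bar z_k}$ on $\partial\Omega$. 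Multiplying by $\alpha_j$, summing over $j$, and using $\sum_j\alpha_j\,\partial\rho/\partial z_j=0$ once more annihilates the $g$-term, leaving the boundary integral equal to $\sum_{j,k}\int_{\partial\Omega}\frac{\partial^2\rho}{\partial z_j\partial\bar z_k}\alpha_j\bar\alpha_k\,e^{-\varphi}\frac{dS}{|\nabla\rho|}$. Assembling the computation, $\|T_\varphi^*\alpha\|_1^2$ equals the complex Hessian term in $\varphi$, plus $\sum_{j,k}\int_\Omega\frac{\partial\alpha_j}{\partial\bar z_k}\overline{\frac{\partial\alpha_k}{\partial\bar z_j}}e^{-\varphi}d\lambda$, plus the Levi form boundary term; adding $\|S_\varphi\alpha\|_3^2$ from the first step, the two copies of $\sum_{j,k}\int_\Omega\frac{\partial\alpha_j}{\partial\bar z_k}\overline{\frac{\partial\alpha_k}{\partial\bar z_j}}e^{-\varphi}d\lambda$ cancel and the asserted identity drops out. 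The main obstacle is the bookkeeping of the two boundary integrals — in particular the argument that $dh$ is conormal to $\partial\Omega$, which isolates the Levi form — together with keeping the conjugations, the sign of the commutator, and the normalization $dS/|\nabla\rho|$ consistent throughout; all the integrations by parts are legitimate because $\alpha\in C^1(\overline\Omega)$, so no appeal to the density Lemma \ref{approximation} is needed at this stage.
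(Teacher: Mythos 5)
Your proof is correct and is exactly the classical Kohn--Morrey--H\"ormander computation: expand $\|S_\varphi\alpha\|_3^2$, express $T_\varphi^*\alpha=-\sum_j\delta_j\alpha_j$, perform two integrations by parts with the commutator $[\partial/\partial\bar z_j,\delta_k]=-\varphi_{k\bar j}$, use the boundary condition $\sum_j\alpha_j\,\partial\rho/\partial z_j=0$ from Lemma \ref{dom} to kill one boundary term, and identify the surviving boundary term with the Levi form by observing that $d(\sum_k\bar\alpha_k\,\partial\rho/\partial\bar z_k)$ is conormal along $\partial\Omega$. The paper does not give its own proof of this lemma — it cites H\"ormander (1965, Prop.\ 2.1.2) and Berndtsson (Thm.\ 1.4.2) — and your derivation coincides with that cited argument.
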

    We are ready to prove Theorem \ref{thm:strictly pseudoconvex}.
    \begin{thm}[= Theorem \ref{thm:strictly pseudoconvex}]
         Let $\Omega\subset \mc^n$ be a bounded strictly pseudoconvex domain with a $C^2$-boundary defining function
        $\rho,$ and let $c_0$ be the smallest eigenvalue of the complex Hessian $\left(\frac{\partial^2\rho}{\partial z_j\partial\bar{z}_k}\right)$ of $\rho$ on $\partial\Omega$.
        Let $\varphi\in C^2(\overline\Omega)$ be a plurisubharmonic function, and let $\psi\in C^2(\overline\Omega)$ be a strictly plurisubharmonic function.
        Let
        $$K:=\frac{\sup_{z\in\Omega}|\nabla \rho(z)|+\sup_{z\in\Omega}|\Delta\rho(z)|}{\inf_{z\in\partial\Omega}|\nabla \rho(z)|},$$
        and set
        $$m_1:=\inf_{z\in\Omega}e^{-\varphi(z)-\psi(z)},\ m_2:=\sup_{z\in\Omega}e^{-\varphi(z)-\psi(z)},
        \ m_3:=\inf_{z\in \partial\Omega}\frac{c_0(z)}{|\nabla\rho(z)|}.$$
        Then there is a constant $\delta:=\delta(|\Omega|,|\partial\Omega|,m_1,m_2,m_3,n)>0$ such that for any nonzero $\bar{\partial}$-closed Lebesgue measurable $(0,1)$-form $f:=\sum_{j=1}^n f_jd\bar{z}_j$
         satisfying
        $$
        M_f:=\int_{\Omega}\sum_{j,k=1}^n \psi^{j\bar k}f_j\bar{f}_ke^{-\varphi-\psi}d\lambda<\infty,
        $$
        we can solve $\bar{\partial}u=f$ with the estimate
        \begin{equation}\label{formula:L2 estimate}
        \int_{\Omega}|u|^2e^{-\varphi-\psi}d\lambda\leq\frac{\|f\|_{2}}{\sqrt{\|f\|_{2}^2+\delta M_f}}\int_{\Omega}\sum_{j,k=1}^n \psi^{j\bar k}f_j\bar{f}_ke^{-\varphi-\psi}d\lambda,
        \end{equation}
        where $(\psi^{j\bar k})_{1\leq j,k\leq n}$ is the inverse of the complex Hessian of $\psi$,
        $$
        \|f\|_{2}^2:=\sum_{j=1}^n\int_{\Omega}|f_j|^2e^{-\varphi-\psi}d\lambda.$$
    \end{thm}
    \begin{proof}
    We may assume $M_f=1$. Similar to the proof of \cite[Lemma 4.4.1]{Hor}, by the Hahn-Banach extension theorem and the Riesz representation theorem, it suffices to prove that there is a $\delta:=\delta(|\Omega|,|\partial\Omega|,K,m_1,m_2,m_3,n)>0$
    such that for any $\alpha:=\sum_{j=1}^n\alpha_j d\bar{z}_j\in  \dom(T_{\varphi+\psi}^*)\cap \dom(S_{\varphi+\psi})$, we have
    $$
    |\langle \alpha,f\rangle_2|^2+\delta\delta_0\frac{|\langle \alpha,f\rangle_2|^2}{\|f\|_2^2}\leq \|T_{\varphi+\psi}^*\alpha\|_1^2+\|S_{\varphi+\psi}\alpha\|_3^2.
    $$
    By Lemma \ref{approximation}, we may assume $\alpha\in C^1_{(0,1)}(\overline{\Omega})$. By Theorem \ref{thm:poincare inequality with weight}
    , there is a constant $\delta:=\delta(|\Omega|,|\partial\Omega|,K,m_1,m_2,n)>0$ (note the remark after Lemma \ref{lem:n integral}) such that
        $$
        \delta \min\{m_3,1\}\int_{\Omega}|g|^2e^{-\varphi-\psi}d\lambda\leq\sum_{j=1}^n\int_{\Omega}\left|\frac{\partial g}{\partial\bar{z}_j}\right|^2e^{-\varphi-\psi}d\lambda+m_3\int_{\partial\Omega}|g|^2e^{-\varphi-\psi}dS.
        $$
    for all $g\in C^1(\overline\Omega)$.
     By the Cauchy-Schwarz inequality, Lemma \ref{dom} and Lemma \ref{basic identity}, we get
    \begin{align*}
    &\quad |\langle \alpha,f\rangle_2|^2+\delta\min\{m_3,1\}\frac{|\langle \alpha,f\rangle_2|^2}{\|f\|_2^2} \\
    &\leq \sum_{j,k=1}^n\int_{\Omega}\alpha_j\bar{\alpha}_k\psi_{j\bar k}e^{-\varphi-\psi}d\lambda+\delta\min\{m_3,1\}\sum_{j=1}^n\int_{\Omega}|\alpha_j|^2e^{-\varphi-\psi}d\lambda  \\
    &\leq \sum_{j,k=1}^n\int_{\Omega}\alpha_j\bar{\alpha}_k\psi_{j\bar k}e^{-\varphi-\psi}d\lambda+\sum_{j=1}^n\int_{\Omega}\left|\frac{\partial\alpha_j}{\partial\bar{z}_k}\right|^2e^{-\varphi-\psi}d\lambda  \\
    &\quad +\sum_{j=1}^n m_3\int_{\partial\Omega}|\alpha_j|^2e^{-\varphi-\psi}dS  \\
    &\leq \sum_{j,k=1}^n\int_{\Omega}\alpha_j\bar{\alpha}_k\psi_{j\bar k}e^{-\varphi-\psi}d\lambda+\sum_{j=1}^n\int_{\Omega}\left|\frac{\partial\alpha_j}{\partial\bar{z}_k}\right|^2e^{-\varphi-\psi}d\lambda
      \\
    &\quad +\sum_{j,k=1}^n\int_{\partial\Omega}\alpha_j\bar{\alpha}_k\frac{\partial^2\rho}{\partial z_j\partial\bar{z}_k}e^{-\varphi-\psi}\frac{dS}{|\nabla\rho|}  \\
    &\leq \|T_{\varphi+\psi}^*\alpha\|_1^2+\|S_{\varphi+\psi}\alpha\|_3^2,
    \end{align*}
    which completes the proof.
    \end{proof}
    \bibliographystyle{amsplain}
    
    \section*{Appendix}\label{appendix}
    In this appendix, we  give the proof of Lemma \ref{lem:basic lemma} for general $n$.\\
    \indent Let us recall the Bochner-Martinelli formula,  which is a generalization of Cauchy's Integral Formula.
    \begin{lem}[see {\cite[Theorem 1.9.1]{Henkin}} or {\cite[Theorem 3.1]{Ada}}]\label{lem:bochner}
    Let $\Omega\subset \mc^n$ be a bounded domain with Lipschitz boundary, then for any
    $f\in C^1(\Omega)\cap C^0(\overline{\Omega})$ such that $\bar\partial f:=\sum_{j=1}^n\frac{\partial f}{\partial\bar{z}_j}d\bar{z}_j$
    has a continuous continuation to $\overline\Omega$ and any $z\in\Omega$, we have
   $$
   f(z)=\frac{(n-1)!}{(2\pi i)^n}\left(\int_{\partial\Omega}f(\xi)\omega(\xi,z)-\int_{\Omega}\bar{\partial}f(\xi)\wedge \omega(\xi,z)
    \right),
    $$
    where
    $$
    \omega(\xi,z):=\frac{1}{|\xi-z|^{2n}}\sum_{j=1}^n(-1)^{j+1}(\bar{\xi}_j-\bar{z}_j)
    \mathop{\wedge}_{k\neq j}d\bar{\xi}_k\wedge \mathop{\wedge}_{l=1}^nd\xi_l.
    $$
    \end{lem}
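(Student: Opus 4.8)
The plan is the classical Stokes'-theorem route, of which the Cauchy--Pompeiu formula (the case $n=1$) is the prototype. Everything rests on two facts about the Bochner--Martinelli kernel $\omega(\xi,z)$: it is $d_\xi$-closed on $\mc^n\setminus\{z\}$, and its coefficients have the locally integrable singularity $O(|\xi-z|^{-(2n-1)})$ along the diagonal. For the first, write $\omega(\xi,z)=g(\xi,z)\wedge\bigwedge_{l=1}^nd\xi_l$ with $g(\xi,z):=|\xi-z|^{-2n}\sum_{j=1}^n(-1)^{j+1}(\bar\xi_j-\bar z_j)\bigwedge_{k\ne j}d\bar\xi_k$. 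Since $d(\bigwedge_ld\xi_l)=0$ and any extra holomorphic differential is annihilated by $\bigwedge_ld\xi_l$, we get $d_\xi\omega=(\bar\partial_\xi g)\wedge\bigwedge_ld\xi_l=\big(\sum_{j=1}^n\partial_{\bar\xi_j}\big[\,|\xi-z|^{-2n}(\bar\xi_j-\bar z_j)\,\big]\big)\bigwedge_{k=1}^nd\bar\xi_k\wedge\bigwedge_{l=1}^nd\xi_l$, and a one-line computation shows the scalar factor equals $n|\xi-z|^{-2n}-n|\xi-z|^2|\xi-z|^{-2n-2}=0$ for $\xi\ne z$. (This is the usual remark that $\omega$ is, up to a constant, $\bar\partial_\xi$ of the Newtonian potential $|\xi-z|^{-(2n-2)}$ wedged with $\bigwedge_ld\xi_l$.)

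Fix $z\in\Omega$ and, for small $\varepsilon>0$, apply Stokes' theorem on $\Omega_\varepsilon:=\Omega\setminus\overline{B(z,\varepsilon)}$ to the $(2n-1)$-form $f(\xi)\,\omega(\xi,z)$, which is $C^1$ near $\overline{\Omega_\varepsilon}$ under the hypotheses on $f$. On $\Omega_\varepsilon$ one has $d_\xi(f\,\omega)=\bar\partial f\wedge\omega$ --- the $\partial f$ part vanishes because $\omega$ already carries every $d\xi_l$, and $d_\xi\omega=0$ by the previous paragraph --- so, with $\partial\Omega_\varepsilon=\partial\Omega-\partial B(z,\varepsilon)$,
\[
\int_{\partial\Omega}f(\xi)\,\omega(\xi,z)-\int_{\partial B(z,\varepsilon)}f(\xi)\,\omega(\xi,z)=\int_{\Omega_\varepsilon}\bar\partial f(\xi)\wedge\omega(\xi,z).
\]
Let $\varepsilon\to0$. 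The right-hand side converges to $\int_\Omega\bar\partial f\wedge\omega$ by integrability of the kernel on $\mr^{2n}\cong\mc^n$. Substituting $\xi=z+\varepsilon\zeta$ shows the integrand on $\partial B(z,\varepsilon)$ is homogeneous of the degree that makes $\int_{\partial B(z,\varepsilon)}\omega(\xi,z)$ a constant $c_n$ independent of $\varepsilon$; one evaluates $c_n=\frac{(2\pi i)^n}{(n-1)!}$ either by a direct computation over the unit sphere or by running the identity above with $f\equiv1$ on a small ball. Since $f$ is continuous at $z$, $\int_{\partial B(z,\varepsilon)}f(\xi)\,\omega(\xi,z)\to c_nf(z)$, and dividing the limiting identity by $c_n$ gives the asserted formula.

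The main (and essentially the only) technical points are the cancellation $\sum_j\partial_{\bar\xi_j}\big[\,|\xi-z|^{-2n}(\bar\xi_j-\bar z_j)\,\big]=0$ and the determination of $c_n=\frac{(2\pi i)^n}{(n-1)!}$ with the correct orientation on $\partial B(z,\varepsilon)$; both are elementary, so I do not expect a genuine obstacle. Since the statement is classical, in the paper it suffices to cite \cite[Theorem 1.9.1]{Henkin} or \cite[Theorem 3.1]{Ada}; we recall it here only to substitute it for Cauchy's formula in the proof of Lemma \ref{lem:basic lemma} for general $n$.
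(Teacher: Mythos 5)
Your argument is the standard Stokes'-theorem proof of the Bochner--Martinelli formula, which is precisely what the cited references \cite{Henkin} and \cite{Ada} carry out; the paper itself does not prove this lemma but simply points to those sources. The $d_\xi$-closedness of $\omega$, the observation that $\partial f\wedge\omega=0$ because $\omega$ already contains $\bigwedge_l d\xi_l$, the convergence of both sides as $\varepsilon\to 0$, and the value $c_n=(2\pi i)^n/(n-1)!$ are all correct.

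The one step that is not justified as written is the application of Stokes' theorem on $\Omega_\varepsilon$. You assert that $f(\xi)\,\omega(\xi,z)$ is ``$C^1$ near $\overline{\Omega_\varepsilon}$ under the hypotheses on $f$,'' but the hypothesis is only $f\in C^1(\Omega)\cap C^0(\overline\Omega)$ with $\bar\partial f$ admitting a continuous continuation to $\overline\Omega$; in particular $f$ need not be $C^1$ in any neighborhood of $\partial\Omega$, so the textbook Stokes' theorem for $C^1$ forms on $\overline{\Omega_\varepsilon}$ does not apply directly. This is exactly the point the paper has in mind when it refers the reader to the cited sources ``for those who are not familiar with such a general version.'' The fix is routine --- exhaust $\Omega$ from inside by compactly contained $C^1$ subdomains $\Omega'\Subset\Omega$, apply the $C^1$ Stokes' theorem on $\Omega'\setminus\overline{B(z,\varepsilon)}$, and then pass to the limit $\Omega'\nearrow\Omega$ using $f\in C^0(\overline\Omega)$, the continuity of $\bar\partial f$ up to $\overline\Omega$, and the integrability of the kernel; alternatively, invoke a version of Stokes' theorem for forms that are continuous on $\overline{\Omega_\varepsilon}$, $C^1$ in the interior, with exterior derivative continuous (or merely integrable) up to the boundary. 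Either way the limiting step deserves to be stated rather than folded into an incorrect regularity claim.
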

     It is clear that
     $$\left|\mathop{\wedge}_{k=1}^nd\bar{\xi}_k\wedge \mathop{\wedge}_{l=1}^nd\xi_l\right|= 2^nd\lambda(\xi),
     \ \sup_{1\leq j\leq n}\left|\mathop{\wedge}_{k\neq j}d\bar{\xi}_k\wedge \mathop{\wedge}_{l=1}^nd\xi_l\right|\leq 2^{n}dS(\xi).$$
     By Lemma \ref{lem:bochner}, we only need to prove the following lemma.
    \begin{lem}\label{lem:basic lemma 2}
    Let $\Omega\subset \mc^n$ be a bounded domain with Lipschitz boundary, and let $p,q,r$ satisfy
        $$
         1\leq p<\infty,\ 1\leq q<\infty,\ 1\leq r<\infty,\ q(2n-p)<2np,\ q(2n-1)<2nr,
        $$
     then
     \begin{itemize}
           \item[(i)] There exists a constant $\delta:=\delta(n,p,q)>0$ such that
          $$
           \delta\|B_\Omega f\|_{L^q(\Omega)}\leq |\Omega|^{\frac{1}{2n}+\frac{1}{q}-\frac{1}{p}}\|f\|_{L^p(\Omega)},\ \forall f\in L^p(\Omega),
           $$
          where
          $$
          B_\Omega f(z):=\int_{\Omega}\frac{(\bar{\xi}_1-\bar{z}_1)f(\xi)}{|\xi-z|^{2n}}d\lambda(\xi),\ \forall z\in \Omega.
          $$
           \item[(ii)] There exists a constant $\delta:=\delta(|\Omega|,|\partial\Omega|,K,n,q,r)>0$ such that
            $$
            \delta\|B_{\partial\Omega} f\|_{L^q(\Omega)}\leq \|f\|_{L^r(\partial\Omega)},\ \forall f\in L^r(\partial \Omega),
            $$
          where
          $$
          B_{\partial\Omega}f(z):=\int_{\partial\Omega}\frac{(\bar{\xi}_1-\bar{z}_1)f(\xi)}{|\xi-z|^{2n}}dS(\xi),\ \forall z\in \Omega,
          $$
          and 
         $$K:=\sup_{z\in \Omega}|F(z)|+\sup_{z\in \Omega}|\nabla\cdot F(z)|.$$
    \end{itemize}
    \end{lem}
     \begin{proof}
       For any $a>0,$ set
         $$
         M(a,\Omega):=\sup_{z\in \overline{\Omega}}\int_{\Omega}|\xi-z|^{-2n+a}d\lambda(\xi),
         $$
         $$
         N(a,\partial\Omega):=\sup_{z\in \Omega}\int_{\partial\Omega}|\xi-z|^{-2n+1+a}dS(\xi),
         $$
         then $M(a,\Omega),\ N(a,\partial\Omega)<\infty$ for any $a>0.$ \\
        (i) We may assume $q\geq p>1$.  Set
        \begin{align*}
        b:&=\frac{1}{2}\left(\max\left\{-\frac{2n}{q},1-2n\right\}+\min\left\{\frac{p-2n}{p},0\right\}\right),\\
        &=\left\{\begin{array}{ll}
        1-n-\frac{n}{p} &\text{ if }q\leq \frac{2n}{2n-1},\\
        \frac{1}{2}-\frac{n}{p}-\frac{n}{q}&\text{ if }q\geq \frac{2n}{2n-1},\ p\leq 2n,\\
        -\frac{n}{q}& \text{ if }p\geq 2n,
        \end{array}
        \right.
        \end{align*}
        then by assumption, we have
        $$
        0<2n+qb\leq 2n,\ 0<p-2n-pb\leq 2n(p-1).
        $$
        Choose $a,q_0,r_0$ such that
        $$
        qa=p,\ q_0(1-a)=p,\ \frac{1}{q}+\frac{1}{q_0}+\frac{1}{r_0}=1,
        $$
        where if $p=q$, then $q_0=\infty$.
         Fix any $f\in C^0(\overline{\Omega})$, then for any $ z\in \Omega,$ we have (use H\"older's Inequality for three functions)
        \begin{align*}
        &\quad |B_\Omega f(z)| \\
        &\leq \int_{\Omega}|f(\xi)||\xi-z|^{-2n+1}d\lambda(\xi)  \\
        &=\int_{\Omega}|f(\xi)|^{a}|\xi-z|^{b}
        |f(\xi)|^{1-a}|\xi-z|^{-2n+1-b}d\lambda(\xi)  \\
        &\leq \left[\int_{\Omega}|f(\xi)|^{p}|\xi-z|^{qb}d\lambda(\xi)\right]^{\frac{1}{q}}
        \left[\int_{\Omega}|f(\eta)|^pd\lambda(\eta)\right]^{\frac{q-p}{qp}}  \\
        &\quad\cdot \left[\int_{\Omega}|w-z|^{(-2n+1-b)\frac{p}{p-1}}d\lambda(w)\right]^{\frac{p-1}{p}},
        \end{align*}
         which implies
        \begin{align*}
        &\quad |B_\Omega f(z)|^q \\
        &\leq \|f\|_{L^p(\Omega)}^{q-p}\int_{\Omega}|f(\xi)|^{p}|\xi-z|^{qb}d\lambda(\xi)\left[\int_{\Omega}|\eta-z|^{(-2n+1-b)\frac{p}{p-1}}d\lambda(\eta)
        \right]^{\frac{q(p-1)}{p}}  \\
        &\leq M\left(\frac{p-2n-pb}{p-1},\Omega\right)^{\frac{q(p-1)}{p}}\|f\|_{L^p(\Omega)}^{q-p}\int_{\Omega}|f(\xi)|^{p}|\xi-z|^{qb}d\lambda(\xi) ,
        \end{align*}
        so (by Fubini's Theorem)
        \begin{align*}
        &\quad \|B_\Omega f\|_{L^q(\Omega)}^q \\
        &\leq M(2n+qb,\Omega)M\left(\frac{p-2n-pb}{p-1},\Omega\right)^{\frac{q(p-1)}{p}}\|f\|_{L^p(\Omega)}^{q-p}\|f\|_{L^p(\Omega)}^p  \\
        &=M(2n+qb,\Omega)M\left(\frac{p-2n-pb}{p-1},\Omega\right)^{\frac{q(p-1)}{p}}\|f\|_{L^p(\Omega)}^{q} .
        \end{align*}
        A direct computation shows that
        \begin{align*}
        &\quad M(2n+qb,\Omega)M\left(\frac{p-2n-pb}{p-1},\Omega\right)^{\frac{q(p-1)}{p}}\\
        &\leq \frac{\omega_{2n}}{2n+qb}\left(\frac{(p-1)\omega_{2n}}{p-2n-pb}\right)^{\frac{q(p-1)}{p}}\left(\frac{2n|\Omega|}{\omega_{2n}}\right)^{1+\frac{q}{2n}-\frac{q}{p}}\\
        &\leq \frac{\omega_{2n}}{2n+qb}\left(\frac{(p-1)\omega_{2n}}{p-2n-pb}+1\right)^{q}\left(\frac{2n|\Omega|}{\omega_{2n}}\right)^{1+\frac{q}{2n}-\frac{q}{p}}.
        \end{align*}
        (ii) Similar to (i), we may assume $q\geq r>1$. Set
        \begin{align*}
        b:&=\frac{1}{2}\left(\max\left\{-\frac{2n}{q},1-2n\right\}+\frac{1-2n}{r}\right),\\
        &=\left\{\begin{array}{ll}
        \left(\frac{1}{2}+\frac{1}{2r}\right)(1-2n) &\text{ if }q< \frac{2n}{2n-1},\\
        -\frac{n}{q}+\frac{1-2n}{2r}&\text{ if }q\geq \frac{2n}{2n-1},\\
        \end{array}
        \right.
        \end{align*}
        then we have
        $$
          0<2n+qb\leq 2n,\ 0<1-2n-rb\leq (2n-1)(r-1).$$
        Choose $a,q_0,r_0$ such that
        $$
        qa=r,\ q_0(1-a)=r,\ \frac{1}{q}+\frac{1}{q_0}+\frac{1}{r_0}=1,
        $$
        where if $q=r$, then $q_0=\infty$.
         Fix  $f\in C^0(\overline{\Omega})$, then for any $ z\in \Omega,$ we have
        \begin{align*}
         &\quad |B_{\partial\Omega}f(z)| \\
         &\leq \int_{\partial\Omega}|f(\xi)| |\xi-z|^{-2n+1}dS(\xi)  \\
         &=\int_{\partial\Omega}|f(\xi)|^a|\xi-z|^b |f(\xi)|^{1-a}|\xi-z|^{-2n+1-b}dS(\xi)  \\
         &\leq \left[\int_{\partial\Omega}|f(\xi)|^{qa}|\xi-z|^{qb}dS(\xi)\right]^{\frac{1}{q}}
          \left[\int_{\partial\Omega}|f(\eta)|^{q_0(1-a)}dS(\eta)\right]^{\frac{1}{q_0}}  \\
         &\quad \cdot\left[\int_{\partial\Omega}|w-z|^{(-2n+1-b)\frac{r}{r-1}}dS(w)\right]^{\frac{r-1}{r}}  \\
         &\leq N\left(\frac{1-2n-rb}{r-1},\partial\Omega\right)^{\frac{r-1}{r}} \left[\int_{\partial\Omega}|f(\xi)|^{r}|\xi-z|^{qb}dS(\xi)\right]^{\frac{1}{q}}  \\
         &\quad\cdot \left[\int_{\partial\Omega}|f(\eta)|^{r}dS(\eta)\right]^{\frac{q-r}{qr}}  ,
        \end{align*}
        so we get
        \begin{align*}
        &\quad |B_{\partial\Omega}f(z)|^{q} \\
        &\leq N\left(\frac{1-2n-rb}{r-1},\partial\Omega\right)^{\frac{q(r-1)}{r}}
        \int_{\partial\Omega}|f(\xi)|^{r}|\xi-z|^{qb}dS(\xi)  \\
        &\quad\cdot  \left[\int_{\partial\Omega}|f(\eta)|^{r}dS(\eta)\right]^{\frac{q-r}{r}},
        \end{align*}
         then
        \begin{align*}
        &\quad \int_{\Omega}|B_{\partial\Omega} f(z)|^{q}d\lambda(z) \\
        &\leq N\left(\frac{1-2n-rb}{r-1},\partial\Omega\right)^{\frac{q(r-1)}{r}} \int_{(z,\xi)\in \Omega\times\partial\Omega}
        |\xi-z|^{qb}d\lambda(z)dS(\xi)  \\
        &\quad\cdot\int_{\partial\Omega}|f(w)|^{r}dS(w)\cdot\left[\int_{\partial\Omega}|f(\eta)|^{r}dS(\eta)\right]^{\frac{q-r}{r}}  \\
        &\leq M(2n+qb,\Omega)|\partial\Omega| N\left(\frac{1-2n-rb}{r-1},\partial\Omega\right)^{\frac{q(r-1)}{r}} \|f\|_{L^r(\partial\Omega)}^q.
        \end{align*}
        A direct computation shows that
        \begin{align*}
        &\quad \frac{2n+qb}{\omega_{2n}}M(2n+qb,\Omega) N\left(\frac{1-2n-rb}{r-1},\partial\Omega\right)^{\frac{q(r-1)}{r}}\\
        &\leq \left(1+\frac{|\Omega|}{\omega_{2n}}\right)^{\frac{q(r-1)}{r}}\left(\frac{r-1}{1-2n-rb}\right)^{\frac{q(r-1)}{r}}
        \left(2nK\omega_{2n}\right)^{\frac{q(r-1)}{r}}\left(\frac{2n|\Omega|}{\omega_{2n}}\right)^{1+\frac{q}{2nr}-\frac{q}{r}}\\
        &\leq \left(1+\frac{|\Omega|}{\omega_{2n}}\right)^q\left(\frac{r-1}{1-2n-rb}+1\right)^{q}
        \left(2nK\omega_{2n}+1\right)^{q}\left(\frac{2n|\Omega|}{\omega_{2n}}\right)^{1+\frac{q}{2nr}-\frac{q}{r}}.
        \end{align*}
         The proof is completed.
    \end{proof}
\end{document}